\date{Jan.~30, 2013} 
\newtheorem{dummy}{anything}[section] 
\newtheorem{theorem}[dummy]{Theorem}
\newtheorem*{thma}{Theorem A}
\newtheorem*{Theorem}{Theorem}
\newtheorem{lemma}[dummy]{Lemma} 
\newtheorem{proposition}[dummy]{Proposition} 
\newtheorem{corollary}[dummy]{Corollary}
\theoremstyle{definition}
\newtheorem{Question}[dummy]{Question}
 \newtheorem{remark}[dummy]{Remark}
 \newtheorem*{Conjecture}{Conjecture} 
 \newtheorem*{acknowledgement}{Acknowledgement}
  \newtheorem*{Examples}{Examples}
   \newtheorem*{question}{Question}
\newcommand
{\eqncount}{\setcounter{equation}{\value{dummy}}%
\addtocounter{dummy}{1}}
\newcommand{\bZ}{\mathbf Z}
\newcommand{\bR}{\mathbb R}
\newcommand{\bbZ}{\mathbb Z}
\newcommand{\bbK}{\mathbb K}
\newcommand{\HF}{\mathbf{H}\mathfrak F}
\newcommand{\scM}{\mathscr M}
\newcommand{\cy}[1]{\bZ/{#1}}
\newcommand{\RP}{\mathbb{RP}}
\newcommand{\wX}{\widetilde X}
\newcommand{\wM}{\widetilde M}
\newcommand{\wH}{\widehat H}
\newcommand{\bd}{\partial}
\newcommand{\vv}{\, | \,}
\newcommand{\ZG}{\bZ G}
\newcommand{\ZGa}{\bZ \varGamma}
\newcommand{\BG}{B\varGamma}
\newcommand{\mmatrix}[4]{\left (\vcenter
{\xymatrix@C-2pc@R-2pc{#1&#2\\#3&#4} }
\right )}
\newcommand{\disjointunion}{\, \sqcup\,}
\DeclareMathOperator{\Sharp}{\sharp\,}
\DeclareMathOperator{\Res}{Res}
\DeclareMathOperator{\Out}{Out}
\DeclareMathOperator{\Homeo}{Homeo}
 \renewcommand{\cy}[1]{\bZ_{#1}}
 \DeclareMathOperator{\Ext}{Ext}
 \newcommand{\la}{\langle}
  \newcommand{\ra}{\rangle}
\newcommand{\G}{\Gamma}
\DeclareMathOperator{\vcd}{vcd}
\DeclareMathOperator{\gencd}{\underline{\textbf{cd}}}
\DeclareMathOperator{\cd}{cd}
\newcommand{\bigast}{\divideontimes}
\DeclareMathOperator{\Fix}{Fix}
\newcommand{\pts}{\text{\,pts}}
\begin{document}
\title
{More examples of discrete co-compact group actions}
\author{Ian Hambleton} 
\thanks{Partially supported by NSERC grant A4000 and the Centre for Symmetry and Deformation, University of Copenhagen}
\address{Department of Mathematics \& Statistics
 \newline\indent
McMaster University
 \newline\indent
Hamilton, ON  L8S 4K1, Canada}
\email{ian{@}math.mcmaster.ca}
\author{Erik K. Pedersen}
\address{Department of Mathematical Sciences
 \newline\indent
University of Copenhagen,
Universitetsparken 5,
 \newline\indent 2100 Copenhagen \O, Denmark}
\email{erik@math.ku.dk}

\begin{abstract} We survey  some results and questions about free actions of infinite groups on products of spheres and euclidean spaces, and give  some new co-compact  examples.
\end{abstract}
\maketitle
\section*{Introduction}
One of the main problems in geometric topology is to understand the fundamental groups of closed \emph{aspherical} smooth or topological manifolds. This problem may be expressed as follows: which discrete groups $\G \cong \pi_1(X, x_0)$ occur for some closed smooth or topological dimensional manifold $X$, with contractible universal covering  $\wX \simeq \bigast$ ? Since the homotopy type of an aspherical manifold  is completely determined by its fundamental group, the possible groups $\G$ must be finitely presented (in fact of type $FP_n$, if $\dim X = n$), and satisfy $n$-dimensional Poincar\'e duality. Although many examples are known (for example, see the work of M.~Davis \cite{davis_m_2000}), the problem is still unsolved and fascinating. 

In this paper we consider a related problem, where the universal covering space is no longer contractible:
\begin{question} Which discrete groups $\G\cong \pi_1(X, x_0)$ occur  for some closed smooth or topological manifold $X$, with  universal covering  $\wX \simeq S^{n}$ ?
\end{question}
 The assumption $\wX \simeq S^n$, implies that any finite subgroup of $\G$ must have periodic cohomology (see Swan \cite{swan1}). Equivalently, every finite subgroup of order $p^2$ must be cyclic, for each prime $p$. If $\dim X = n$, we have a formulation of the topological space form problem, and its solution by Madsen, Thomas and Wall \cite{madsen-thomas-wall1} determines precisely which finite groups $G$ can act freely on some $S^n$ by \emph{homeomorphisms}: for each prime $p$, every subgroup of $G$ with order $p^2$ or $2p$ must be cyclic. The $2p$-conditions hold if and only if $G$ contains no dihedral subgroups. Milnor \cite{milnor2} proved that the $2p$-conditions hold for a free $G$-action by homeomorphisms on $S^n$.
  
\begin{question} Which finite periodic groups can occur as subgroups of $\G = \pi_1(X, x_0)$ for some closed topological manifold $X$, with universal covering $\wX \simeq S^n$ and $\dim X > n$~?
\end{question}

For the remainder of the paper, we assume that $\dim X  > n$, so that $\G$ must be an infinite group, acting freely, properly discontinuously and co-compactly on the non-compact universal covering $\wX$. We observe that any finite periodic group satisfying all the $p^2$ and $2p$ conditions can occur as a finite subgroup in many different co-compact actions. 

\begin{Examples} Let $G$ be a finite group satisfying all the $p^2$ and $2p$ conditions, and let $\alpha\colon G \to GL_m(\bbZ)$ denote an integral representation of $G$.  Given a free action $\beta \colon G \to \Homeo(S^n)$ on $S^n$ by homeomorphisms,  the composite with the diagonal
$$(\hat\alpha \times \beta) \circ \Delta \colon G \to G \times G \to  \Homeo(T^m \times S^n)$$ gives a free $G$-action on $T^m \times  S^n$, where $T^m$ is the $m$-torus and $\hat\alpha\colon G \to \Homeo(T^m)$ is the action induced by $\alpha$.
Let $X$ be the quotient of  $T^m \times S^{n}$ by this $G$-action. The resulting fundamental groups are the semidirect products $\pi_1(X, x_0)  = \bbZ^m \rtimes_\alpha G$, 
where the conjugation action on the normal subgroup $\pi = \bbZ^m$ is induced by the representation $\alpha\colon G \to GL_m(\bbZ)$. In all these examples, the universal covering $\wX = \bR^m \times S^n$.
\end{Examples}

 Farrell and Wall (see \cite[p.~518]{wall-ppr}) asked if there was an analogue of Milnor's condition \cite{milnor2} in the setting above, which would rule out finite dihedral subgroups for co-compact actions of infinite groups ~?
In other  words, \emph{are the $2p$-conditions also necessary for finite subgroups of $\G = \pi_1(X,x_0)$ if $\dim X >n$}~?

In our earlier work \cite[Theorem 8.3]{hp1} we found the first examples of infinite discrete groups $\G$ containing finite dihedral subgroups, which could act freely, properly discontinuously and co-compactly on some product $\bR^m \times S^n$. This shows that Milnor's $2p$-conditions are \emph{not} necessary, and indicates that 
understanding exactly which finite periodic groups can appear as subgroups of $\G =\pi_1(X, x_0)$, with $\wX \simeq S^n$, is likely to be a difficult problem.

Our goal in this paper is to survey some of the previous work related to the questions above, and point out some open problems. In addition, we will discuss  \emph{virtually surface} groups $\G$, given by extensions of the form
$$1 \to \pi \to \G \to G \to 1$$
where the normal subgroup $\pi \cong \pi_1(\Sigma, x_0)$  is the fundamental group of a closed surface $\Sigma$, 
and the quotient group $G$ is finite. 
We call such an extension \emph{geometric} if the conjugation action $\alpha \colon G \to \Out(\pi)$ is induced by a group action $\rho\colon G \to \Homeo(\Sigma)$ with $\Fix(\Sigma, G)\neq\emptyset$. We assume that $\rho$ is a \emph{tame} topological action, meaning that all the fixed sets are locally-flat TOP submanifolds.
We do not assume that $\Sigma$ is orientable, or that the $G$-action on $\Sigma$ is effective.   Here is a new result about co-compact actions:

\begin{thma} Let $\pi = \pi_1(\Sigma)$, for a closed surface $\Sigma^2 \neq S^2, \RP^2$. Then the groups $\G = \pi\rtimes_\alpha D_p$ act freely, properly and co-compactly on $\bR^2 \times S^n$ if and only if $n \equiv 3 \mod 4$ and $\alpha$ is induced by an involution on $\Sigma$ with isolated fixed points.
\end{thma}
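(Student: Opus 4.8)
The plan is to recast the question as a problem of promoting a finite $\G$-Poincar\'e complex over the classifying space $\underline{E}\G$ for proper actions to a closed topological manifold, playing Swan's periodicity theorem off against Milnor's theorem that the dihedral group $D_p$ has no topological space form. Since $\pi$ is an infinite surface group, $\vcd\G=\vcd\pi=2$, and $\widetilde\Sigma=\bR^2$ is a cocompact two-dimensional model for $\underline{E}\G$: the semidirect product $\G$ acts on $\widetilde\Sigma$ ($\pi$ by deck transformations, $D_p$ by lifts of the induced action on $\Sigma$), this descends to a $D_p$-action on $\Sigma$ realizing $\alpha$, and since a finite group acting on $\bR^2$ by homeomorphisms is conjugate to a linear action, $D_p$ has a global fixed point $x_0\in\Sigma$. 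The easy half of necessity is then immediate: if $\G$ acts freely, properly and co-compactly on $W=\bR^2\times S^n$, then restricting to $D_p$ and passing to quotients gives a closed $(n+2)$-manifold with fundamental group $D_p$ and universal cover $W\simeq S^n$, so by Swan \cite{swan1} the Tate cohomology $\wH^{\,*}(D_p;\bZ)$ is periodic of period dividing $n+1$; a Sylow computation for $1\to\bZ/p\to D_p\to\bZ/2\to1$ (the $2$-primary part is $H^*(\bZ/2;\bZ)$, period $2$, and the $p$-primary part is $H^*(\bZ/p;\bZ)^{\bZ/2}$, nonzero only in degrees $\equiv0\bmod4$) gives minimal period $4$, so $4\mid n+1$, i.e. $n\equiv3\bmod4$.

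For the remaining necessity I would analyse how the free action lies over $\underline{E}\G$. The action provides a $\G$-map $W\to\underline{E}\G$, hence a map $X=W/\G\to\underline{E}\G/\G$ over the quotient orbifold, and --- using that the Farrell--Jones assembly maps are known for virtually surface groups --- this map is, after controlled surgery, equivalent to a block (approximate) fibration whose fibre datum over each isotropy group $H$ is a finite Poincar\'e $S^n$-space form for $H$, glued over the singular strata and required to survive $L$-theory surgery to a genuine $\mathrm{TOP}$ manifold. Since $D_p$ is an isotropy group (at $x_0$) and has no topological space form \cite{milnor2,madsen-thomas-wall1}, the corresponding surgery obstruction (essentially Milnor's semicharacteristic) must be absorbed, and this happens only when enough transverse directions are free: the two transverse dimensions of $\bR^2$ suffice exactly when the $D_p$-locus in $\underline{E}\G$ is isolated. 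If the $D_p$-action on $\Sigma$ is effective, then near $x_0$ it is the symmetry group of a regular $p$-gon, so the $D_p$-point sits on one-dimensional mirror strata with $\bZ/2$-isotropy and only one free transverse direction remains; if $\alpha$ is induced by an orientation-reversing involution, the whole $D_p$-locus is one-dimensional. In both cases the obstruction reappears, so no free action exists. Hence $\alpha$ must be induced by an involution $\tau$ with zero-dimensional (hence isolated) and, since the extension splits, nonempty fixed set.

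For sufficiency, suppose $n\equiv3\bmod4$ and $\alpha$ is induced by such a $\tau$. Then $\G\cong(\bZ/p)\rtimes(\pi\rtimes_\tau\bZ/2)$ acts on $\widetilde\Sigma=\bR^2$ with $\bZ/p$ acting trivially, yielding a cocompact two-dimensional $\underline{E}\G$ whose generic isotropy is $\bZ/p$ and whose only other isotropy is $D_p$, at the isolated lifts of $\Fix(\tau,\Sigma)$. Since the period $4$ divides $n+1$ and the relevant negative $K$-theory of $\bZ D_p$ vanishes (Carter), Swan's construction gives a free $D_p$-action on a finite complex $Y\simeq S^n$; restricting $Y$ to $\bZ/p$ over the generic part of $\bR^2$ and capping off with the free $D_p$-family over disc neighbourhoods of the $D_p$-points produces a finite $(n+2)$-dimensional $\G$-Poincar\'e complex with universal cover $\simeq S^n$. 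One then does surgery, bounded/controlled over $\bR^2=\underline{E}\G$, to convert it into a closed $\mathrm{TOP}$ manifold $X$ with $\widetilde X\cong\bR^2\times S^n$: exactly as in \cite[Theorem~8.3]{hp1} the two transverse dimensions absorb Milnor's obstruction, the remaining surgery obstruction lies in an $L$-group of $\bZ\G$ that is assembled (Farrell--Jones) from the finite subgroups and from $H_*(\underline{E}\G/\G;\mathbb L)$, and the normal data can be chosen so the obstruction vanishes and the universal cover is the untwisted product; deck transformations then give the required free, proper, co-compact $\G$-action on $\bR^2\times S^n$.

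The main obstacle is the surgery and assembly analysis common to both directions: identifying precisely which $K$- and $L$-theoretic obstructions occur, proving that the $\bR^2$-factor absorbs them exactly when the $D_p$-isotropy locus is isolated, and --- for the "only if" part --- showing that the failure of the $2p$-condition for $D_p$ genuinely obstructs every other configuration of $\alpha$.
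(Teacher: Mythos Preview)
Your overall strategy --- analyse a free co-compact $\G$-action on $\bR^2\times S^n$ as a stratified problem over $\underline{E}\G=\widetilde\Sigma$, and then do controlled/blocked surgery to absorb Milnor's $2p$-obstruction into the transverse directions --- is indeed the philosophy underlying \cite{hp1} and \cite{hp3}. But as written the proposal has one clear slip and one substantive gap.

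The slip: when you restrict the $\G$-action on $W=\bR^2\times S^n$ to the finite subgroup $D_p$, the quotient $W/D_p$ is \emph{not} closed; $W$ is non-compact and $D_p$ is finite. Your periodicity conclusion $4\mid n+1$ is still correct, since $D_p$ acts freely on $W\simeq S^n$ and the Gysin sequence for $W/D_p\to BD_p$ gives the period, but the sentence as stated is false.

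The gap is the one you flag yourself at the end. Both directions of your argument rest on an unproved assertion: that the Milnor obstruction for the $D_p$-stratum is absorbed by the two transverse $\bR^2$-directions \emph{if and only if} that stratum is zero-dimensional. You describe the dichotomy (effective dihedral action $\Rightarrow$ one-dimensional mirror strata; orientation-reversing involution $\Rightarrow$ one-dimensional fixed locus; involution with isolated fixed points $\Rightarrow$ zero-dimensional) correctly, but the actual $L$-theory computation showing the obstruction survives in the first two cases and vanishes in the third is exactly the content of \cite[Theorem~8.2--8.3]{hp1} and \cite[Theorem~7.11]{hp3}, and you have not reproduced it.

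The paper handles this by citing those results rather than redeveloping them. For sufficiency, it first \emph{constructs} the required involution with two isolated fixed points on any non-orientable surface, via iterated equivariant connected sum of copies of an explicit involution $(\bbK^2,\alpha)$ on the Klein bottle (with quotient $\RP^2$); this is the genuinely new ingredient beyond Stark's orientable case. The blocked surgery problem $\wM\times_{D_p}\Sigma\to\wX\times_{D_p}\Sigma$ is then solved exactly as in \cite[Theorem~8.2]{hp1}, using the ramified-covering handle decomposition of $(\Sigma,\alpha)$ to specify the blocks. For necessity the paper does \emph{not} argue over $\underline{E}\G$ at all: instead it invokes \cite[Theorem~A]{hp3} to compactify the free co-compact $\G$-action on $\bR^2\times S^n$ to a $\G$-action on $S^{n+2}$ which is free off an invariant circle on which the action is induced by $\alpha$, and then applies \cite[Theorem~7.11]{hp3} to read off the constraint on $\alpha$. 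So the paper's route is shorter precisely because your ``main obstacle'' is packaged inside the cited theorems.
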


For $\Sigma = T^2$, this result is a special case of \cite[Theorem 8.3]{hp1}. Stark \cite{stark1} elaborated our existence method in \cite[\S 8]{hp1} to  provide many more examples of discrete groups $\G$ containing finite dihedral subgroups, which could act freely, properly and co-compactly on some $\bR^m \times S^n$. These examples included groups of the form $\G = \Delta\rtimes_\alpha D_p$, where $\Delta$ is the fundamental group of an orientable surface. Theorem A gives new examples for non-oriented surfaces. Our non-existence method in \cite{hp1} was extended by Anderson and Connolly \cite{anderson-connolly1} to obtain similar restrictions on finite quaternionic subgroups,

Theorem A suggests a possible general statement for co-compact actions of virtually surface groups.  Let $\G$ be a geometric extension of  $\pi = \pi_1(\Sigma, x_0)$ by finite (tame) group action $\rho\colon G \to \Homeo(\Sigma)$ with non-empty fixed set. If $H < G$ is a subgroup, then the restriction of the $G$-action on $\Sigma$ to $H$ is denoted $\Res_H(\rho)$. Suppose that all the finite subgroups of $\G$ are periodic groups, with periods dividing $2d$, and that
$\Sigma  \neq S^2, \mathbf{RP}^2$.

\begin{Conjecture} $\G$ acts freely, properly and co-compactly on $\bR^2 \times S^n$ for some $n$, if and only if $n+1 \equiv 0 \pmod{2d}$ and $\Res_H(\rho)$ is an involution on $\Sigma$ with isolated fixed points, for each finite dihedral subgroup $H \subset \G$.
\end{Conjecture}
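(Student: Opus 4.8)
The plan is to reduce the statement to an equivariant bounded surgery problem over a two‑dimensional classifying space for proper actions, following the existence/non‑existence method of \cite[\S 8]{hp1}. Since $\Sigma\neq S^2,\RP^2$, the universal cover $\widetilde{\Sigma}=\bR^2$ is contractible, and the lift of the geometric $G$-action $\rho$ makes $\widetilde{\Sigma}$ into a proper, co-compact, smooth $\G$-space whose isotropy groups are exactly the finite subgroups of $\G$; as every finite group acting on $\bR^2$ has contractible (non-empty) fixed set, $\widetilde{\Sigma}$ is a model for $\underline{E}\G$ of geometric dimension $2$. A free, proper, co-compact $\G$-action on $\bR^2\times S^n$ is the same thing as a closed $(n+2)$-manifold $X$ with $\pi_1(X)=\G$ and $\wX\simeq S^n$, so the problem splits into: (i) constructing the homotopy-theoretic input, namely a finite-dimensional $\G$-$CW$ model for a sphere together with a compatible map to $\underline{E}\G=\widetilde{\Sigma}$ realizing the periodicity of $\G$; and (ii) realizing the resulting homotopy type by a closed manifold via bounded surgery over $\widetilde{\Sigma}\simeq\bR^2$.

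For sufficiency, assume $n+1\equiv 0\pmod{2d}$ and the dihedral condition. At the homotopy level the required model exists once all finite subgroups are periodic and $2d\mid n+1$: this is a relative version of Swan's theorem \cite{swan1}, and it is unobstructed even at the dihedral subgroups, because Milnor's obstruction to free actions on $S^n$ is a manifold phenomenon that is invisible to homotopy theory. Assembling the local data over the (at most one-dimensional) singular structure of the orbit space $\Sigma/G$ is then obstruction theory in Bredon cohomology, the gluing obstructions lying in Tate cohomology groups of periodic groups in a vanishing range, modulo a finiteness ($\widetilde{K}_0$) obstruction which one checks vanishes using the Farrell--Jones conjecture for the virtually surface group $\G$. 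One then runs bounded surgery over $\bR^2$ to promote the homotopy model to a closed manifold. The surgery obstruction lives in a controlled $L$-group over $\bR^2$ assembled from the stratification; the dihedral hypothesis pins down the shape of the singular strata over which one surgers, and the bounded $L$-theory computation of \cite[\S 8]{hp1} (extended from $\Sigma=T^2$ to general $\Sigma$) shows that the obstruction contributed by each dihedral subgroup $H$ vanishes precisely when $\Res_H(\rho)$ is an involution with isolated fixed points and $2d\mid n+1$; enlarging $n$ by multiples of $2d$ moves the total obstruction into the range where it is killed.

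For necessity, suppose $\G$ acts freely, properly and co-compactly on $\bR^2\times S^n$. First, the induced free co-compact $\G$-action on a space homotopy equivalent to $S^n$ forces the Farrell cohomology of $\G$ to be periodic with period dividing $n+1$; since the finite subgroups have periods dividing $2d$ (and, as in Theorem~A, $2d$ is the resulting period of $\G$), this gives $n+1\equiv 0\pmod{2d}$. Second, for a dihedral subgroup $H<G$ let $\G_H$ be its preimage under $\G\to G$, a finite-index subgroup of $\G$ which therefore also acts freely, properly and co-compactly on $\bR^2\times S^n$; it is a geometric extension of the surface group $\pi$ by the finite dihedral group $H$, and the non-existence results for such groups --- established for $\pi\cong\bZ^2$ in \cite[Theorem 8.3]{hp1}, for orientable $\Sigma$ by \cite{stark1}, and for split extensions by $D_p$ of an arbitrary admissible $\Sigma$ in Theorem~A --- force $\Res_H(\rho)$ to be an involution with isolated fixed points. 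Extending these non-existence theorems, and the corresponding constructions, to the full generality required here (possibly non-split extensions, non-orientable $\Sigma$, and $H=D_q$ with $q$ not prime) is precisely the content that must be supplied.

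The main obstacle is the bounded (controlled) surgery computation over the general singular structure. Unlike the case $G=D_p$ of Theorem~A, where the orbit space has essentially a single type of singular point, a general tame $G$-action produces a genuine graph of finite periodic groups as the singular set of $\Sigma/G$, and the controlled $L$-group over $\bR^2$ in which the surgery obstruction lives is assembled from this graph. The crux is to show that, in a suitable periodicity range, this assembled obstruction is detected by the dihedral subgroups alone --- so that it vanishes if and only if the stated dihedral condition holds --- while keeping careful track of the orientation characters when $\Sigma$ is non-orientable or the $G$-action on $\Sigma$ is ineffective. It is here that the hypotheses $\Sigma\neq S^2,\RP^2$ (so that $\widetilde{\Sigma}=\bR^2$ is the correct model for $\underline{E}\G$) and $\Fix(\Sigma,G)\neq\emptyset$ (so that the manifold model has the required fixed-point structure) are genuinely used.
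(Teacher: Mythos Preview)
The statement you are attempting to prove is presented in the paper as a \emph{Conjecture}, not a theorem: the paper offers no proof of it. Theorem~A handles only the split case $\G=\pi\rtimes_\alpha D_p$, and the authors explicitly say that Theorem~A ``suggests a possible general statement'' before stating the Conjecture. So there is no ``paper's own proof'' to compare your proposal against.

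Your write-up is accordingly not a proof but a program, and you are candid about this: you say that extending the non-existence results to non-split extensions, non-orientable $\Sigma$, and general $D_q$ ``is precisely the content that must be supplied,'' and you name the bounded $L$-theory computation over a general singular stratification as ``the main obstacle.'' That self-assessment is accurate. What you have is a plausible outline of the strategy one would try, not a proof; the hard analytic content (the controlled $L$-group calculation showing detection by dihedral subgroups alone, and the vanishing of the assembled obstruction in the stated periodicity range) is asserted rather than carried out.

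One point of divergence worth noting: for the necessity direction in Theorem~A the paper does \emph{not} argue via the non-existence theorems of \cite{hp1} and \cite{stark1} as you do. Instead it invokes the compactification result \cite[Theorem~A]{hp3} to turn the free $\G$-action on $\bR^2\times S^n$ into an action on $S^{n+2}$ that is free away from an invariant $S^1$ on which the action is induced by $\alpha$, and then applies \cite[Theorem~7.11]{hp3}. If you pursue the Conjecture, the compactification route is the one the authors evidently have in mind, and your restriction-to-$\G_H$ argument would need an analogous compactification step (and an extension of \cite[Theorem~7.11]{hp3}) rather than a direct appeal to the cited non-existence theorems.
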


In the next few sections we will give a short survey of related results,  and then list some open problems which we find interesting. Theorem A will be proved in Section \ref{sec:thma}.

\begin{acknowledgement}
The authors would like to thank Guido Mislin and Stratos Prassidis for helpful comments.
\end{acknowledgement}

\section{Groups with periodic cohomology}\label{sec:periodic}
The usual group cohomology theory  can be defined in two equivalent ways:
$$H^*(G; M): = \Ext^*_{\ZG}(\bZ, M) = H^*(\BG;M),$$
where $M$ is a $\ZG$-module, either by using homological algebra and derived functors, or in terms of the singular cohomology with local coefficients of the classifying space $\BG = K(G,1)$. Group cohomology can be extended to Tate cohomology $\wH^*(G;M)$, for any finite group $G$ and any $\ZG$-module $M$ (see \cite[XII]{cartan-eilenberg1} or \cite[VI]{brown1}). The key to this extension is the existence of a \emph{complete resolution} for the trivial $\ZG$-module $\bZ$, meaning an exact sequence 
$$\dots \to P_k \to \dots \to P_1 \to P_0 \to P_{-1} \to \dots \to P_{-k} \dots \to  \dots$$
of projective $\ZG$-modules $\{P_k\vv -\infty < k < \infty\}$, which agrees with a projective resolution of $\bZ$ in all positive dimensions $k \geq 0$. Tate cohomology agrees with ordinary group cohomology in positive dimensions, and with group homology in negative dimensions.

A finite group $G$ has \emph{periodic cohomology of period $q$} if there exists a class 
$\alpha \in H^q(G;\bZ)$ such that cup product
$$ \alpha\cup - :  H^i(G;M) \to H^{i+q}(G, M)$$
gives an isomorphism for all $i > 0$. As remarked above, the finite groups with periodic cohomology are exactly the ones for which every subgroup of order $p^2$ is cyclic, for every prime $p$. Alternately, these are the groups whose Sylow $p$-subgroups are either cyclic, or possibly generalized quaternion  if $p=2$. The finite groups with periodic cohomology have been classified \cite{wolf1}, and the detailed classification was an essential ingredient in the solution of the topological spherical space from problem \cite{madsen-thomas-wall1}. Periodicity for ordinary group cohomology gives periodicity for Tate cohomology as well.

For infinite groups, a useful periodicity notion was introduced by Talelli \cite{talelli1}. An infinite discrete group $\G$ has \emph{cohomology of period $q$ after $k$ steps} if  the functors $H^i(\G;-)$ and $H^{i + q}(\G;-)$ are naturally isomorphic as functors on $\ZGa$-modules, for all $i \geq k+1$. We will say that the group $\G$ is $(q,k)$-periodic if this condition holds. There is a stronger version of this idea, where there exists a class $\alpha \in H^q(\G;\bZ) = H^q(\BG;\bZ)$ such that the cup product
$$ \alpha\cup - :  H^i(\BG;M) \to H^{i+q}(\BG, M)$$
gives an isomorphism, for all $i \geq k+1$ and all $\ZGa$-modules $M$. In this case, we will say that the $CW$-complex $\BG$ \emph{has $(q,k)$-periodic cohomology} (or just that $\BG$ has periodic cohomology if the dimensions are understood). This terminology was introduced by Adem and Smith \cite[Theorem 1.2]{adem-smith1}, who showed that a connected $CW$-complex $X$ has periodic cohomology if and only if there is a spherical fibration $E \to X$ with total space homotopy equivalent to a finite dimensional $CW$-complex.

\begin{proposition}  Let $X$ be a connected $CW$-complex, and suppose that the universal covering $\wX\simeq S^n$. Then $\BG$ has periodic cohomology, where $\G = \pi_1(X,x_0)$, if and only if $X$ is homotopy equivalent to a finite dimensional $CW$-complex.
\end{proposition}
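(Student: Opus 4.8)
The plan is to use the Adem--Smith characterization of periodic cohomology, quoted above, as the main tool, with the spherical fibration over $\BG$ attached to the universal covering serving as the bridge between the two conditions. Since $\wX$ is simply connected and $\G = \pi_1(X,x_0)$, the classifying map of the universal covering may be converted (up to homotopy) into a fibration $p \colon X \to \BG$ whose fibre is $\wX \simeq S^n$; as the fibre is homotopy equivalent to a sphere, $p$ is a spherical fibration over $\BG$ with total space $X$. With this in place, the implication ``$X$ homotopy equivalent to a finite dimensional $CW$-complex $\Rightarrow \BG$ has periodic cohomology'' is immediate: $p\colon X \to \BG$ is a spherical fibration whose total space is by hypothesis homotopy equivalent to a finite dimensional complex, so the Adem--Smith theorem applies directly to give periodic cohomology for $\BG$.

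For the converse, suppose $\BG$ has periodic cohomology. Then by Adem--Smith there is a spherical fibration $q\colon E \to \BG$ with fibre $S^N$ and with $E$ homotopy equivalent to a finite dimensional complex; pulling back along the (null-homotopic) composite $\wX \to X \to \BG$ shows that the pulled back fibration $p^{*}q$ over $X$ is trivial over the universal covering. Next I would form the fibrewise join $p *_{\BG} q \colon X *_{\BG} E \to \BG$, a spherical fibration with fibre $S^{n+N+1}$; its total space is the double mapping cylinder of $X \leftarrow X \times_{\BG} E \to E$, and the map $X \times_{\BG} E \to E$ is a spherical fibration with fibre $S^n$ over the finite dimensional base $E$, so $X \times_{\BG} E$ is homotopy equivalent to a finite dimensional complex.

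It then remains to conclude, from the spherical fibration $X \times_{\BG} E \to X$ with finite dimensional total space, that $X$ itself is homotopy equivalent to a finite dimensional complex. I would do this by a cohomological-dimension argument: the Serre spectral sequence of $p$ is concentrated in the two rows $*=0$ and $*=n$, so its only possibly nonzero differential is $d_{n+1}$, namely cup product with the twisted Euler class $e(p) \in H^{n+1}(\BG;\bZ^{w})$; one shows that periodicity of $\BG$ forces $e(p)$ (or a power of it) to act invertibly in all sufficiently large degrees, hence $H^{i}(X;\mathcal L)=0$ for $i$ large and every local coefficient system $\mathcal L$, and then invokes Wall's theorem that a $CW$-complex dominated by a finite dimensional complex and of finite cohomological dimension is homotopy equivalent to a finite dimensional complex (domination by a finite complex being available in the geometric situations of interest, where $X$ is a closed manifold). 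Alternatively, one could try to produce a fibrewise section of $p^{*}q$, making $X$ a homotopy retract of the finite dimensional space $X \times_{\BG} E$, and again apply Wall's theorem.

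The main obstacle is precisely this last step: passing from the existence of \emph{some} homotopy finite dimensional spherical fibration over $\BG$ to control of \emph{the specific} fibration $p\colon X \to \BG$ determined by $X$ --- equivalently, showing that $e(p)$ genuinely participates in the periodicity of $\BG$, or that $p^{*}q$ admits a fibrewise section. This requires combining the triviality of $p^{*}q$ over $\wX \simeq S^{n}$ with the multiplicative structure of the periodicity classes of $\BG$ and a comparison of $p$ with $q$ up to fibre homotopy equivalence and fibrewise stabilization; carrying out that comparison carefully is where the real work lies.
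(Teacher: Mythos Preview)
Your route through Adem--Smith and fibrewise joins is a detour. The paper's argument is the direct one you mention only at the end as an ``alternative'': apply the Gysin sequence of the specific fibration $p\colon X \to \BG$ with fibre $\wX\simeq S^n$. For every $\ZGa$-module $M$ this gives immediately that cup product with the Euler class $\alpha = e(p)\in H^{n+1}(\BG;\bZ)$ is an isomorphism $H^i(\BG;M)\to H^{i+n+1}(\BG;M)$ for all $i\ge d$ if and only if $H^j(X;M)=0$ for all $j\ge d+n$; Wall's Theorem~E then converts the latter into ``$X$ is homotopy equivalent to a finite dimensional complex''. Both directions come out at once, with no auxiliary fibration $q\colon E\to\BG$, no fibrewise joins, and no sections to produce. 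Your use of Adem--Smith for the forward implication is valid but circuitous, since the Gysin sequence already exhibits $e(p)$ itself as the periodicity class.

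That said, the difficulty you isolate in the converse --- passing from ``\emph{some} class is a periodicity generator'' to ``$e(p)$ is one'' --- is genuine, and the paper's sketch does not address it either: what the Gysin argument actually proves is the biconditional ``$e(p)$ is a periodicity generator'' $\Longleftrightarrow$ ``$X$ is finite dimensional''. The existential statement ``$\BG$ has periodic cohomology'' is strictly weaker, and indeed the converse as literally stated fails: for $\G$ finite with periodic cohomology and $n\ge 3$, take $X=\BG\times S^n$; then $\wX\simeq S^n$, $\BG$ is periodic, but $e(p)=0$ and $X$ is not finite dimensional. So your concern is well founded, but the remedy is to read the proposition as being about the specific Euler class (which is what the paper's proof establishes), not to build machinery to compare $p$ with an unrelated $q$ --- that comparison cannot succeed in general.
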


\begin{proof} This is well-known (see \cite[p.~518]{wall-ppr}). Here is a sketch of the proof, assuming that $\G$ acts trivially on the cohomology of $\wX$. Let  $\alpha \in  H^{n+1}(\BG;\bZ)$ denote the Euler class of the spherical fibration given by the classifying map $X \to \BG$ of the universal covering. Then the Gysin sequence shows that cup product with 
$$\alpha\cup - : H^i(\BG;M) \to H^{i + n+ 1}(\BG;M)$$ is an isomorphism
 for $i \geq d$ if and only if $H^j(X;M)= 0$, 
for $j \geq d+n$. By a result of Wall \cite[Theorem E]{wall-finiteness1}, the latter condition is equivalent to $X$ being finite-dimensional.
\end{proof}

\begin{corollary}
If a discrete group $\G$ acts freely and properly discontinuously  on $\bR^m \times  S^n$, then $\G$ is countable and $\BG$ has $(m,n+1)$-periodic cohomology.
\end{corollary}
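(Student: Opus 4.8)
The plan is to push the problem down to the orbit space and then invoke the Proposition. First I would read off countability from the geometry alone: since the action is free and properly discontinuous, each point $z\in\bR^m\times S^n$ has an open set $U\ni z$ with $gU\cap U=\emptyset$ for every $g\neq 1$, so the orbit $\G z$ is a discrete subspace of $\bR^m\times S^n$. But $\bR^m\times S^n$ is second countable, and a discrete subspace of a second countable space is countable; as the action is free, $g\mapsto gz$ is a bijection $\G\to\G z$, and therefore $\G$ is countable.

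Next I would form the orbit space $X:=(\bR^m\times S^n)/\G$. Because $\G$ acts freely, properly discontinuously, and by homeomorphisms, $X$ is a topological manifold of dimension $m+n$, the quotient map is the universal covering, and $\pi_1(X,x_0)\cong\G$; since $\bR^m$ is contractible, $\wX=\bR^m\times S^n\simeq S^n$. A topological manifold has the homotopy type of a CW-complex, and an $(m+n)$-manifold is homotopy equivalent to a CW-complex of dimension $\leq m+n$, so $X$ is homotopy equivalent to a finite-dimensional CW-complex. Applying the Proposition to such a model then shows that $\BG$ has periodic cohomology. To pin down the indices I would rerun the Gysin-sequence computation from the proof of the Proposition: the classifying map $X\to\BG$ of the universal covering is a spherical fibration with fiber homotopy equivalent to $S^n$, its Euler class $\alpha$ lies in $H^{n+1}(\BG;\bZ)$, and cup product with $\alpha$ gives an isomorphism $H^i(\BG;M)\to H^{i+n+1}(\BG;M)$ for $i\geq d$ exactly when $H^j(X;M)=0$ for $j\geq d+n$; since $X$ has the homotopy type of an $(m+n)$-complex this holds with $d=m+1$, which is precisely the assertion that $\BG$ has $(m,n+1)$-periodic cohomology.

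I do not expect a serious obstacle here, since the argument is essentially formal once the Proposition is available. The one point that needs a word of care is the action of $\G$ on $H^n(\wX;\bZ)$: if this action is nontrivial there is no untwisted Euler class in $H^{n+1}(\BG;\bZ)$, and one argues instead with the orientation character (twisted coefficients) or by first passing to the orientation-preserving subgroup of $\G$ of index at most $2$, exactly as in the general case of the Proposition. The only other input, that an open topological manifold has the homotopy type of a finite-dimensional CW-complex, is standard (handle and triangulation theory with the Kirby--Siebenmann results, or the fact that topological manifolds are ANRs) and does not affect the period count.
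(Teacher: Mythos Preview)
Your proof is correct and follows essentially the same route as the paper: obtain countability from proper discontinuity and second countability of $\bR^m\times S^n$, pass to the quotient manifold $X$ of dimension $m+n$, and read off the periodicity indices from the Gysin sequence of the spherical fibration $X\to\BG$. The paper's version is a three-sentence sketch of the same argument; your added remarks on the CW structure of topological manifolds and the orientation-twisting caveat are exactly the details one would supply when expanding it.
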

\begin{proof} Since the $\G$-action is properly discontinuous, $\G$ is a  countable group.
The quotient space $X = \bR^m \times S^n/ \G$ is a topological manifold of dimension $n+m$. The periodicity dimensions follow from the Gysin sequence.
\end{proof}

 \begin{remark}\label{rem:equiv}
  A necessary condition for $\BG$ to have periodic cohomology is that every finite subgroup of $\G$ must have periodic cohomology (this is also suffficient for countable groups $\G$ if $\vcd \G < \infty$, or if $\G$ is locally finite).  
  
 It is clear that if $\BG$ has $(q,k)$-periodic cohomology, then $\G$ is $(q,k)$-periodic. The converse is conjectured to be true \cite{talelli2}, and this was verified by Talelli \cite{talelli3} for group which are periodic after 1 step, and by Mislin and Talelli \cite{mislin-talelli1} for countable groups in the Kropholler class $\HF$ of \emph{hierarchically decomposable} groups which admit a bound on the orders of finite subgroups. 

 \end{remark}

\section{Generalized cohomological dimension and Farrell cohomology}
In this section we will describe some of the algebraic background to Wall's  question 
\cite[p.~518]{wall-ppr} from 1979 about which discrete groups admit free, properly discontinuous actions on $\bR^m \times S^n$, and give some of the main results in the order they were discovered. 

A discrete group $\G$ has \emph{finite cohomological dimension} if the trivial 
$\ZGa$-module $\bZ$ has a finite length projective resolution. In this case, we write $\cd(\G) < \infty$, and we have the formula
\eqncount
\begin{equation}\label{eq:cd} \cd(\G) = \sup\{ k \vv H^k(\G; F) \neq 0, \text{\ for $F$ a free $\ZGa$-module}\}\ .
\end{equation}
If $\G$ has a finite index subgroup $\G' \leq \G$, with $\cd(\G') < \infty$, then we say that $\G$ has \emph{finite virtual cohomological dimension} and write $\vcd\G  < \infty$. 

 Johnson (1985,\cite{johnson3}) answered Wall's question in a special case, and constructed many \emph{geometric} examples of free, proper (and even co-compact) actions for groups arising from uniform  lattices in non-compact Lie groups.  

 \begin{theorem}[{\cite[Corollary 2]{johnson3}}] Let $\G$ be an extension
 $$1 \to A \to \G \to G\to 1$$
where $A$ is a normal subgroup of $\G$ such that $K(A,1)$ is a finite complex, and $G$ is a finite periodic group. Then $\G$ acts freely and properly on some $\bR^m \times S^n$.
\end{theorem}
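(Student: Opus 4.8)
The plan is to build an explicit model for the universal space from the two pieces $A$ and $G$. By hypothesis $K(A,1)$ is a finite complex, so $\cd(A) < \infty$; combined with the fact that $G$ is finite periodic, one sees that $\G$ has $\vcd(\G) < \infty$, and by Remark \ref{rem:equiv} every finite subgroup of $\G$ is periodic (it injects into $G$). The strategy is to produce a free, proper, finite-dimensional $\G$-CW-complex homotopy equivalent to a sphere; one then appeals to the standard surgery/bundle-theoretic machinery (as in the Proposition) to realize this homotopy action by an actual free proper action on $\bR^m \times S^n$ after crossing with a suitable Euclidean space coming from $\widetilde{K(A,1)}$.

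The key steps, in order, are: (1) Let $Y = \widetilde{K(A,1)}$, a contractible finite-dimensional space on which $A$ acts freely and cocompactly; then $\bR^m = Y$ up to $A$-equivariant homotopy, where $m = \cd(A)$ (more precisely one thickens the finite complex $K(A,1)$ to a compact manifold with free $A$-action on the interior of its universal cover, a standard move). (2) Use the periodicity of $G$: since $G$ is finite periodic, Swan's theorem (\cite{swan1}) gives a finite-dimensional free $G$-CW-complex $Z \simeq S^{q-1}$ for $q$ a multiple of the period. (3) Assemble the extension: the point is to promote the $G$-action on $Z$ to a $\G$-action by pulling back along $\G \to G$, and then form a balanced product or a fiberwise join construction over $K(A,1)$ so that $\G$ acts freely and properly on a space of the form (Euclidean) $\times$ (sphere). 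Concretely, $\G$ acts on $Y \times Z$ diagonally — freely because $A$ acts freely on $Y$ and the $G$-quotient acts freely on $Z$ — and properly because $Y/A$ is compact and $Z$ is compact. Since $Y$ is contractible and $Z \simeq S^{q-1}$, we get $Y \times Z \simeq S^{q-1}$, finite-dimensional, with a free proper $\G$-action. (4) Finally, pass from this homotopy-theoretic free proper action on a finite-dimensional Poincaré complex to a genuine action on $\bR^m \times S^n$: this is where one invokes the surgery-theoretic realization (the quotient is a finitely dominated Poincaré complex with the right fundamental group and universal cover a homotopy sphere, and one stabilizes by crossing with $\bR$'s, or equivalently uses the topological space-form type argument) to replace the homotopy sphere by an honest sphere $S^n$ and the contractible factor by $\bR^m$.

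The main obstacle is step (4): having a free proper action on something homotopy equivalent to $\bR^m \times S^n$ is genuinely weaker than having one on $\bR^m \times S^n$ itself, and closing this gap requires the full apparatus of equivariant surgery / the classification of finite structures in the relevant $L$-groups, together with Wall's finiteness obstruction vanishing (or becoming irrelevant after stabilization) — this is precisely the technical heart of Johnson's argument and the reason the hypothesis "$K(A,1)$ a \emph{finite} complex" (not merely finitely dominated) is imposed. A secondary subtlety is making the two constructions ($Y$ from $A$, $Z$ from $G$) compatible with the extension structure when the extension is non-split; here one works with the classifying map $\BG \to BG$ and pulls back the spherical fibration carrying the periodicity class, rather than trying to write down an explicit diagonal action, so that non-splitness causes no trouble. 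The dimensions $m$ and $n$ are not canonical — one expects $m$ close to $\cd(A)$ and $n+1$ a multiple of the period of $G$ (after the stabilization inherent in the surgery step), but the statement only claims existence of \emph{some} such $m,n$, so no sharp bookkeeping is needed.
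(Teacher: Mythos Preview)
The paper does not prove this theorem: it is quoted from Johnson \cite{johnson3} as part of the historical survey in Section~2, with no argument supplied here beyond the one-sentence description that Johnson ``constructed many \emph{geometric} examples \dots\ for groups arising from uniform lattices in non-compact Lie groups.'' There is therefore no in-paper proof to compare your proposal against.

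On the proposal itself: the outline is reasonable in spirit but has a genuine gap at step~(3). You write that ``$\G$ acts on $Y\times Z$ diagonally,'' but $\G$ has no given action on $Y=\widetilde{K(A,1)}$; only $A$ does. For a non-split extension there is no reason the conjugation action of $G$ on $A$ should lift to an action of $\G$ on $Y$, so the diagonal action simply does not exist in general. You acknowledge this later and propose the fix of pulling back a spherical fibration along $B\G\to BG$, which is correct --- but once you do that, you have abandoned the explicit $Y\times Z$ model and are essentially running the Connolly--Prassidis \cite{connolly-prassidis1} or Adem--Smith \cite{adem-smith1} argument, both of which postdate Johnson. Step~(4) is also left as a black box (``standard surgery/bundle-theoretic machinery''), and that passage from a finite-dimensional homotopy model to an honest $\bR^m\times S^n$ is exactly where the substantive work lies in those later papers; it is not a formality. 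Johnson's own argument, by contrast, is geometric from the start: he produces the action on an actual $\bR^m\times S^n$ directly rather than via a homotopy model plus realization, which is also why the hypothesis is phrased as ``$K(A,1)$ is a finite complex'' rather than merely $\cd(A)<\infty$.
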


For discrete groups with $\vcd\G <\infty$, Farrell \cite{farrell2} introduced and studied a generalization of Tate cohomology (and Tate homology), now usually called \emph{Farrell cohomology}, and denoted $\wH^*(\G;M)$. There is natural map
$$\iota \colon H^*(\G;M) \to   \wH^*(\G;M)$$
from the usual group cohomology, which is an isomorphism in large positive dimensions. The main step in Farrell's construction was again to produce a complete resolution $\{P_k\}$, $-\infty < k < \infty$,  of projective $\ZGa$-modules, but this time generalized to require only that the resolution should agree with a projective resolution of $\bZ$ in sufficiently high dimensions $k \geq N$ (depending on the virtual dimension $\vcd \G$).
For finite groups, Farrell cohomology reduces to Tate cohomology. 

Brown \cite[Chap.~X]{brown1} proved that the Farrell cohomology of $\G$ has no elements of infinite order, and is determined by the lattice of finite subgroups of $\G$. In particular, it follows that a discrete group $\G$ with $\vcd \G<\infty$ has periodic Farrell cohomology (in the same sense as for ordinary cohomology) if and only if every finite subgroup $H \leq \G$ has periodic cohomology.

\smallskip
A breakthrough on Wall's question was made by Connolly and Prassidis (1989):

\begin{theorem}[{\cite{connolly-prassidis1}}] Suppose that $\vcd\G < \infty$. The $\G$ acts freely and properly on some $\bR^m\times S^n$ if and only if $\G$ is countable and has periodic Farrell cohomology.
\end{theorem}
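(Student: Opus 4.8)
The plan is to prove the two implications separately, putting essentially all of the work into the ``if'' direction.

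\emph{Necessity.} If $\G$ acts freely and properly discontinuously on $M = \bR^m \times S^n$, then $\G$ is countable (a properly discontinuous action is by a countable group, as in the proof of the corollary above), and any finite subgroup $H \leq \G$ acts freely on $M \simeq S^n$; by Swan's theorem $H$ then has periodic cohomology. Since Brown's theorem describes the Farrell cohomology of $\G$ in terms of the lattice of its finite subgroups, it follows that $\G$ has periodic Farrell cohomology.

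\emph{Sufficiency --- the homotopy-theoretic model.} Conversely, assume that $\vcd\G < \infty$, that $\G$ is countable, and that $\G$ has periodic Farrell cohomology. By Brown's theorem this is equivalent to: every finite subgroup of $\G$ has periodic cohomology. The ``sufficient'' clause of Remark~\ref{rem:equiv} (which uses $\vcd\G<\infty$ and countability) then gives that $\BG$ has $(q,k)$-periodic cohomology for some $q,k$; taking $q$ to be a common multiple of the periods of the finite subgroups and enlarging it, we may arrange $q \equiv 0 \pmod 4$ and $q \gg \vcd\G$. Applying the Adem--Smith theorem to $\BG$ --- or, directly, attaching cells to $\BG$ along a periodicity class $\alpha \in H^q(\BG;\bZ)$, which exists because $\iota\colon H^{\ast}(\G;-) \to \wH^{\ast}(\G;-)$ is an isomorphism in high degrees --- produces a spherical fibration $p\colon E \to \BG$ with fibre $S^n$, $n+1$ a multiple of $q$, whose total space $E$ is homotopy equivalent to a finite-dimensional $CW$-complex. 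Pulling $p$ back along the contractible universal cover of $\BG$ identifies $\widetilde{E}$ with the fibre up to homotopy, so $\widetilde{E} \simeq S^n$: thus $\G$ acts freely and cellularly on a finite-dimensional complex homotopy equivalent to $S^n$, with finite-dimensional (but in general infinite) quotient $E$.

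\emph{Sufficiency --- the geometric model, and the main obstacle.} It remains to realise this by a manifold of the form $\bR^m \times S^n$. Fix a finite-dimensional $CW$ model for $E$, embed it in $\bR^m$ with $m$ large, and let $W$ be a regular neighbourhood: $W$ is then an open topological $(m+n)$-manifold with $\pi_1 W = \G$, its universal cover $\widetilde{W}$ is an open manifold homotopy equivalent to $S^n$ with a free properly discontinuous $\G$-action, and there is a proper control map $\widetilde{W} \to \bR^m$. One now straightens $\widetilde{W}$ to $\bR^m \times S^n$: after stabilisation the spherical fibration is reducible, so $W$ carries a degree-one normal map to the total space of an $S^n$-bundle, and bounded (proper) surgery over $\bR^m$ converts it to a homeomorphism. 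The surgery obstruction lies in a bounded $L$-group and vanishes because $n$ lies in the correct residue class modulo $4$ and $m$ is large; note that the projective finiteness obstruction of $E$ need \emph{not} vanish, since $X = \widetilde{W}/\G$ is allowed to be non-compact, and $\vcd\G < \infty$ is used to keep the ends of $\widetilde{W}$ controlled. This last step --- upgrading finite-dimensional, purely homotopy-theoretic periodicity to an open manifold whose universal cover is \emph{homeomorphic}, not merely homotopy equivalent, to $\bR^m \times S^n$ --- is the main obstacle: it needs non-compact surgery theory and forces the dimension restrictions, whereas the cohomological bookkeeping of the first two paragraphs is comparatively formal.
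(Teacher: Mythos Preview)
The paper does not give its own proof of this statement: it is quoted as a result of Connolly and Prassidis in the survey section, with citation only. So there is nothing in the paper to compare your argument against.

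On the merits of your proposal: the necessity paragraph is fine. For sufficiency, once you have invoked Remark~\ref{rem:equiv} to get that $\BG$ has periodic cohomology, the Adem--Smith theorem (stated a few paragraphs later in the paper) gives the free proper action on $\bR^m\times S^n$ directly. Your entire third paragraph is therefore redundant --- and the argument becomes anachronistic, since you are deducing the 1989 Connolly--Prassidis theorem as an immediate special case of its 2001 generalization. That is logically valid but not an independent proof; indeed, the ``sufficient'' clause of Remark~\ref{rem:equiv} for groups with $\vcd\G<\infty$ is itself essentially part of what Connolly--Prassidis established, so there is some circularity in spirit.

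Your third paragraph, where you attempt to carry out the geometric step by hand, has a genuine error. If $E$ is embedded in $\bR^m$ and $W$ is a regular neighbourhood, then $W$ is an $m$-manifold, not an $(m+n)$-manifold; its universal cover $\widetilde W$ is an open $m$-manifold homotopy equivalent to $S^n$, and the target would be $\bR^{m-n}\times S^n$. The ``proper control map $\widetilde W\to\bR^m$'' is not defined (the inclusion $W\hookrightarrow\bR^m$ does not lift equivariantly), and the bounded-surgery step is only gestured at. The actual Connolly--Prassidis argument works differently: it uses the classifying space $\underline{E}\G$ for proper actions (which is finite-dimensional when $\vcd\G<\infty$) together with Swan complexes for the finite isotropy groups to build the model, rather than embedding a quotient complex in Euclidean space.
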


Ikenaga \cite{ikenaga1,ikenaga2} introduced a \emph{generalized cohomological dimension}, denoted $\gencd \G$, and an extension of Farrell cohomology.  For any discrete group $\G$, define
 $$\gencd\G = \sup\{k \vv \Ext^k_{\ZGa}(M, F) \neq 0, \text{\ for $M$ a $\bZ$-free $\ZGa$-module, and $F$ free over $\ZGa$}\}.$$
 Since $H^k(\G;F) = \Ext^k_{\ZGa}(\bZ, F)$, this is a natural generalization (replacing $\bZ$ in \eqref{eq:cd} by an arbitrary $\bZ$-free module $M$).
Here are some properties of Ikeanaga's generalized dimension.
\begin{enumerate}
\item $\gencd \G \leq \cd \G$, with equality if $\cd \G < \infty$. 
\item For any subgroup $\G' \leq \G$, $\gencd \G' \leq \gencd \G$, with equality if the index $[\G:\G'] < \infty$.
\item If $\vcd\G < \infty$, then $\gencd\G = \vcd\G$.
\item For $G$ finite, $\gencd G = 0$.
\item If $\gencd \G = 0$, then $\G$ is a torsion group.
\end{enumerate}

 Ikenaga \cite[p.~433]{ikenaga1} also defined and investigated a generalization of Farrell cohomology $\wH^*(\G;M)$ for those discrete groups $\G$ with $\gencd\G <\infty$, which admit a complete resolution of projective $\ZGa$-modules (agreeing with a projective resolution of $\bZ$ in sufficiently large positive dimensions. If $\vcd\G < \infty$, then the generalized Farrell cohomology agrees with the original version. One new feature is that the generalized Farrell cohomology can contain elements of infinite order. Ikenaga \cite[\S 5]{ikenaga1} provided an interesting class $\mathcal C_\infty$ of discrete groups with $\gencd <\infty$ which admit complete resolutions, so the generalized Farrell cohomology is defined. Mislin \cite{mislin2} later extended Farrell cohomology to \emph{arbitrary} discrete groups, such that the extended version agrees with Ikenaga's generalized Farrell cohomology whenever it is defined (see the discussion in \cite[\S 2]{mislin-talelli1}). 

\smallskip
The next step towards Wall's question was by taken by Prassidis (1992):
\begin{theorem}[{\cite[Theorem 10]{prassidis1}}] There exist discrete groups $\G$ with $\vcd\G = \infty$ which act freely and properly on some $\bR^m\times S^n$.
\end{theorem}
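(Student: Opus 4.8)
The plan is to prove the theorem by \emph{constructing} such a $\G$, by first isolating what the Connolly--Prassidis existence argument actually requires and then producing a group of infinite virtual cohomological dimension that still satisfies it. Inspecting that argument, the hypothesis ``$\vcd\G<\infty$'' enters in only two ways: (i) to guarantee a \emph{complete resolution} of $\bZ$ over $\ZGa$, which --- combined with a periodicity class --- lets one build, Swan-style, a finite-dimensional homotopy model for the quotient; and (ii) as a finite ``dimension'' controlling the bounded/proper surgery theory used to promote that homotopy model to a manifold. Both are available under the weaker hypotheses that $\G$ lies in Ikenaga's class $\mathcal C_\infty$ (so $\gencd\G<\infty$ and a complete resolution exists, giving a generalized Farrell cohomology $\wH^*(\G;-)$) and that this $\wH^*(\G;-)$ is periodic. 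So the goal reduces to exhibiting a countable group $\G$ with $\vcd\G=\infty$ which lies in $\mathcal C_\infty$ and has periodic generalized Farrell cohomology.

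For this I would take a countably infinite \emph{locally finite} group all of whose finite subgroups are periodic, for instance the Pr\"ufer group $\G=\bZ/p^\infty=\varinjlim_i\bZ/p^i$ (or the ``locally generalized-quaternion'' group $\varinjlim_i Q_{2^i}$, or a product $\bZ^s\times\bZ/p^\infty$). Here $\vcd\G=\infty$ comes for free: $\G$ is divisible, hence has no proper finite-index subgroup, so its only finite-index subgroup is $\G$ itself, which contains $\bZ/p$ and therefore has $\cd=\infty$. On the other hand $\gencd\G=0$, being a directed colimit of finite groups; $\G$ admits a complete resolution, so $\G\in\mathcal C_\infty$; and since every finite subgroup of $\G$ is cyclic (resp.\ cyclic or generalized quaternion), hence periodic, the generalized Farrell cohomology $\wH^*(\G;-)$ is the filtered colimit of the Tate cohomologies of the $\bZ/p^i$ and is periodic of period $2$. (One can even check directly that $\BG=\varinjlim B(\bZ/p^i)$ has $(2,1)$-periodic cohomology: in the Milnor $\varprojlim$--$\varprojlim^1$ sequence the restriction maps on $H^{2k}(-;\bZ)$ are the reductions $\bZ/p^{i+1}\twoheadrightarrow\bZ/p^i$ and the odd cohomology vanishes, so $\varprojlim^1$ is trivial and cup product with the evident degree-$2$ class is an isomorphism in degrees $\ge 2$.)

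Granting the reduction, the construction proceeds as in Connolly--Prassidis. For the homotopy model: let each $\bZ/p^i$ act freely and linearly on $S^{2k-1}\subset\bC^k$, with lens-space quotients $L_i=S^{2k-1}/(\bZ/p^i)$ and $p$-fold covering maps $\pi_i\colon L_i\to L_{i+1}$ induced by the inclusions $\bZ/p^i<\bZ/p^{i+1}$; the infinite mapping telescope $X=\operatorname{Tel}(\pi_1,\pi_2,\dots)$ is a $2k$-dimensional $CW$-complex with $\pi_1 X\cong\G$ and, since each $\pi_i$ lifts to a homeomorphism $S^{2k-1}\to S^{2k-1}$ of universal covers, $\widetilde X\simeq S^{2k-1}$. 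Then cross with a torus $T^m$ ($m$ large) and run the proper ($\bR^m$-bounded) surgery machinery on $X\times T^m$ --- whose end is modelled on the telescope and is therefore sufficiently tame --- to replace it by a topological manifold $M^{\,m+n}$ with $\pi_1 M\cong\G\times\bZ^m$ and, by an end-recognition theorem (valid for $m+n\ge 5$), with $\widetilde M$ homeomorphic to $\bR^m\times S^n$, $n=2k-1$. Restricting the resulting free, properly discontinuous $\G\times\bZ^m$-action to the subgroup $\G$ --- co-compactness is not needed --- proves the theorem.

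The main obstacle is the reduction, i.e.\ verifying that the Connolly--Prassidis surgery argument survives the replacement of $\vcd\G<\infty$ by membership in $\mathcal C_\infty$. In the original argument one passes to a torsion-free finite-index subgroup with a \emph{finite-dimensional} classifying space and does bounded surgery over that finite complex; with $\vcd\G=\infty$ no such subgroup exists, so one must argue directly with $\G$, controlling the surgery obstruction groups $L_*\!\bigl(\bZ[\G\times\bZ^m]\bigr)$ and the Spivak normal fibre homotopy type / normal-invariant obstructions in $\widetilde{KO}$. The feature that rescues the argument --- and which the preceding discussion of $\gencd$ and of Ikenaga's class is precisely setting up --- is that $\G=\varinjlim_i\bZ/p^i$ is a directed union of finite periodic groups, so that all the relevant $K$- and $L$-theory is a filtered colimit of the (well-understood) $K$- and $L$-theory of the $\bZ/p^i$ and their products with $\bZ^m$, with periodicity holding uniformly in $i$; the obstructions are then killed after finitely many further stabilizations, exactly as in the classical solution \cite{madsen-thomas-wall1} of the topological spherical space form problem, but carried along the colimit.
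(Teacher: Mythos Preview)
The paper does not give its own proof of this theorem; it is quoted from Prassidis \cite{prassidis1}, and the paper's only commentary is the single sentence following the statement: ``The examples found by Prassidis are for certain groups $\G$ in Ikenaga's class ${\mathcal C}_\infty$, which have $\gencd\G <\infty$, but $\vcd\G = \infty$, and periodic generalized Farrell cohomology.'' Your proposal is entirely consistent with that description: you correctly isolate that the construction hinges on replacing the hypothesis $\vcd\G<\infty$ by membership in $\mathcal C_\infty$ together with periodicity of generalized Farrell cohomology, and your candidate groups (the Pr\"ufer group $\bZ/p^\infty$, locally generalized-quaternion groups, and their products with $\bbZ^s$) are precisely the sort of countable locally finite examples with $\gencd=0$ that this framework accommodates. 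Your honest flagging of the ``main obstacle'' --- checking that the bounded/proper surgery machinery of Connolly--Prassidis goes through with $\gencd$ in place of $\vcd$ --- is exactly the content of Prassidis's paper, and the paper's surrounding discussion (in particular the proposition equating complete resolutions plus $\gencd\leq k$ plus Farrell periodicity with $(q,k)$-periodicity) is set up to make that replacement plausible. In short, there is nothing further to compare: your sketch expands on, and is faithful to, the one-line summary the paper provides.
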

 The examples found by Prassidis are for certain groups $\G$ in Ikenaga's class ${\mathcal C}_\infty$, which have $\gencd\G <\infty$, but $\vcd\G = \infty$, and periodic generalized Farrell cohomology.

\begin{remark} If $\G$  acts freely and properly on some $\bR^m \times S^n$, then $\gencd \G < \infty$. 
This follows by combining two results of Mislin\footnote{We are indebted to Guido Mislin for this remark.} and Talelli \cite[Theorem 2.5, Corollary 5.2]{mislin-talelli1}.
\end{remark}

 The relation between  periodicity for generalized Farrell cohomology and Talelli's periodicity was explained by Prassidis \cite[\S 2]{prassidis1}.
\begin{proposition}[{\cite[Prop.~3]{prassidis1}}] Let $\G$ be a group with $\gencd\G < \infty$. Then the following are equivalent:
\begin{enumerate}
\item The group $\G$ admits a complete resolution, $\gencd\G \leq k$, and $\G$ has periodic generalized Farrell cohomology of period $q$.
\item $\G$ has $(q,k)$-periodic cohomology.
\end{enumerate}
\end{proposition}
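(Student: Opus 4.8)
The plan is to prove the two implications separately, using the comparison map $\iota\colon H^*(\G;M)\to\wH^*(\G;M)$ from ordinary to generalized Farrell cohomology, together with the natural identification
\[
\Ext^j_{\ZGa}(M,N)\;\cong\;H^j\bigl(\G;\Hom_{\bZ}(M,N)\bigr),
\]
which holds for every $\bZ$-free $\ZGa$-module $M$ because $P_\bullet\otimes_{\bZ}M$ is a projective $\ZGa$-resolution of $M$ whenever $P_\bullet\to\bZ$ is one for $\bZ$. I will also use the two standard facts about (generalized) Farrell cohomology recalled above: $\iota$ is a natural isomorphism in all degrees $i>\gencd\G$, and $\wH^*(\G;-)$ is computed from $\Hom_{\ZGa}(\{P_i\},-)$ for any complete resolution $\{P_i\}$.

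\smallskip\noindent\emph{$(1)\Rightarrow(2)$.} Assume $\G$ has a complete resolution, $\gencd\G\le k$, and $\wH^*$ is periodic of period $q$. Then $\iota$ is a natural isomorphism for all $i\ge k+1$. For each such $i$ and each $\ZGa$-module $M$, chaining $\iota$, the period-$q$ isomorphism of $\wH^*$, and $\iota^{-1}$ gives an isomorphism $H^i(\G;M)\cong H^{i+q}(\G;M)$ natural in $M$. Hence $\G$ is $(q,k)$-periodic.

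\smallskip\noindent\emph{$(2)\Rightarrow(1)$.} Assume $\G$ is $(q,k)$-periodic ($\gencd\G<\infty$ is a standing hypothesis). First I would deduce $\gencd\G\le k$: if $\Ext^j_{\ZGa}(M,F)\neq0$ for some $\bZ$-free $M$, free $F$, and some $j\ge k+1$, then $H^j(\G;\Hom_{\bZ}(M,F))\neq0$ by the displayed identity, and $(q,k)$-periodicity propagates this to $\Ext^{j+tq}_{\ZGa}(M,F)\cong H^{j+tq}(\G;\Hom_{\bZ}(M,F))\neq0$ for all $t\ge0$, contradicting $\gencd\G<\infty$. Next, fix a projective resolution $\cdots\to P_1\to P_0\to\bZ\to0$ and let $\Omega^i\bZ$ ($i\ge1$) denote its $i$-th syzygy, a $\bZ$-free module, so that $0\to\Omega^{k+q}\bZ\to P_{k+q-1}\to\cdots\to P_k\to\Omega^k\bZ\to0$ is exact with projective middle terms. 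Dimension shifting converts the $(q,k)$-periodicity into a natural isomorphism
\[
\Ext^j_{\ZGa}\bigl(\Omega^k\bZ,-\bigr)\;\cong\;H^{j+k}(\G;-)\;\cong\;H^{j+k+q}(\G;-)\;\cong\;\Ext^j_{\ZGa}\bigl(\Omega^{k+q}\bZ,-\bigr)\qquad(j\ge1).
\]
The crux is to upgrade this to a \emph{stable} module isomorphism $\Omega^k\bZ\cong\Omega^{k+q}\bZ$; feeding it into the displayed exact sequence then yields a periodicity sequence $0\to\Omega^k\bZ\to Q_{q-1}\to\cdots\to Q_0\to\Omega^k\bZ\to0$ with all $Q_i$ projective, exactly as in Swan's treatment of finite periodic groups. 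Splicing countably many copies of this sequence onto the tail $\cdots\to P_{k+1}\to P_k\to\Omega^k\bZ\to0$ produces a doubly infinite exact complex of projective $\ZGa$-modules agreeing with the chosen resolution of $\bZ$ in all large degrees -- a complete resolution -- whose $q$-fold shift is chain homotopy equivalent to itself, so $\wH^*(\G;-)$ is periodic of period $q$. Together with $\gencd\G\le k$ this gives (1).

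\smallskip\noindent\emph{Main obstacle.} The one non-formal step is the passage, in $(2)\Rightarrow(1)$, from the natural isomorphism of contravariant functors $\Ext^1_{\ZGa}(\Omega^k\bZ,-)\cong\Ext^1_{\ZGa}(\Omega^{k+q}\bZ,-)$ to a stable isomorphism $\Omega^k\bZ\cong\Omega^{k+q}\bZ$: over a general ring, isomorphic $\Ext$-functors need not come from isomorphic, or even stably isomorphic, modules. This is precisely where $\gencd\G<\infty$ is needed -- the high syzygies of $\bZ$ then lie in a subcategory that behaves like modules over a self-injective ring (in modern terminology they are Gorenstein projective), in which the syzygy operator is invertible, so that functorial periodicity is realized by genuine exact sequences of projectives. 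Everything else -- dimension shifting, the comparison theorem, and the formal properties of complete resolutions and generalized Farrell cohomology -- is routine.
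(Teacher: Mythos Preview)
The paper does not actually prove this proposition; it is stated with attribution to Prassidis \cite[Prop.~3]{prassidis1} and immediately followed by the next topic in the survey, with no proof given. So there is no proof in the paper to compare your attempt against.

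On the merits of your argument: the $(1)\Rightarrow(2)$ direction is correct and standard --- since $\iota$ is a natural isomorphism in degrees above $\gencd\G\le k$, periodicity of $\wH^*$ pulls back to periodicity of $H^*$ for $i\ge k+1$. In $(2)\Rightarrow(1)$, your deduction of $\gencd\G\le k$ from $(q,k)$-periodicity together with the standing hypothesis $\gencd\G<\infty$ is clean and correct. The strategy of dimension-shifting to a stable isomorphism $\Omega^k\bZ\cong\Omega^{k+q}\bZ$ and then splicing a period-$q$ exact sequence of projectives onto the truncated resolution is exactly how Prassidis argues in the cited source.

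You have, however, correctly flagged the one place where your write-up is only a sketch: passing from the natural isomorphism $\Ext^{\ge 1}_{\ZGa}(\Omega^k\bZ,-)\cong\Ext^{\ge 1}_{\ZGa}(\Omega^{k+q}\bZ,-)$ to an honest stable isomorphism of modules. Your diagnosis is right --- this is where $\gencd\G<\infty$ does the work, via Ikenaga's result that the high syzygies $\Omega^j\bZ$ (for $j\ge\gencd\G$) admit complete resolutions (equivalently, are Gorenstein projective). But you have not actually executed this step: one still has to produce a map $\Omega^{k+q}\bZ\to\Omega^k\bZ$ inducing the given natural transformation (via Yoneda, applied to the class in $\Ext^q_{\ZGa}(\Omega^k\bZ,\Omega^k\bZ)\cong H^{k+q}(\G;\Omega^k\bZ)\cong H^k(\G;\Omega^k\bZ)$ corresponding to the identity), and then use the complete resolution of the syzygy to see that a map inducing isomorphisms on all positive $\Ext$ is a stable equivalence. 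So your outline is sound and matches the original, but the acknowledged obstacle is genuinely where the work lies and is not yet filled in.
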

Finally, Adem and Smith (2001) completely answered Wall's original question:
\begin{theorem}[{\cite[Corollary~1.3]{adem-smith1}}] A discrete group $\G$ acts freely and properly on some $\bR^m \times S^n$ if and only if $\G$ is countable and $\BG$ has periodic cohomology.
\end{theorem}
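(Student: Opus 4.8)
We separate the two implications; the forward one is immediate, and the whole content lies in the converse.

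For the ``only if'' direction, suppose $\G$ acts freely and properly discontinuously on $\bR^m\times S^n$. Then the Corollary in Section~\ref{sec:periodic} applies verbatim: properness forces $\G$ to be countable, the quotient $\bR^m\times S^n/\G$ is an $(m+n)$-manifold, and the Gysin sequence of the resulting spherical fibration over $\BG$ shows that $\BG$ has $(m,n+1)$-periodic, in particular periodic, cohomology. So nothing new is needed here.

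For the ``if'' direction, let $\G$ be countable with $\BG$ of periodic cohomology. The plan is to first extract a homotopy-theoretic model and then geometrize it. By the Adem--Smith characterization of periodic cohomology quoted before the Proposition in Section~\ref{sec:periodic}, there is a spherical fibration $p\colon E\to\BG$ with fibre $S^{N}$ whose total space $E$ is homotopy equivalent to a finite-dimensional $CW$-complex; by forming iterated fibrewise joins we may take $N$ as large as we please relative to the homotopy dimension of $E$, so as to be in a stable range. Pulling back along $\widetilde{\BG}\to\BG$ and using $\widetilde{\BG}\simeq\bigast$ trivializes the fibration, so $\widetilde E\simeq S^{N}$ carries a free $\G$-action with quotient $E$, and (the fibre being simply connected after the join) $\pi_1(E)\cong\G$. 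The remaining and principal step is to promote this to an honest manifold $M$ of some fixed dimension $m+N$ with $\pi_1(M)\cong\G$ and $\widetilde M\cong\bR^m\times S^{N}$. This has two parts: $(i)$ upgrade the spherical fibration to a fibre bundle by lifting its classifying map $\BG\to BG(N+1)$ through $B\mathrm{TOP}(N+1)$, which in the stable range is possible and whose associated $\bR^{N+1}$-microbundle supplies the $\bR^m$-directions as a normal thickening; $(ii)$ exhaust $E$ by finite subcomplexes $E|_{K_i}$, thicken each to a compact manifold with boundary inside the bundle neighbourhood, and glue, obtaining a non-compact $M$ with $\pi_1(M)=\varinjlim\pi_1(E|_{K_i})=\G$ and $\widetilde M$ a bounded-dimensional open manifold homotopy equivalent to $S^{N}$; one then identifies $\widetilde M\cong\bR^m\times S^{N}$ from the classification of such open manifolds.

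The hard part is step $(ii)$: keeping the ambient dimension bounded independently of the exhaustion, and recognizing the universal cover as $\bR^m\times S^{N}$ rather than merely some finite-dimensional complex homotopic to a sphere. This is exactly the point where a naive ``embed a finite complex in Euclidean space'' argument fails, since $\G$ need not act on any finite-dimensional contractible space (witness Prassidis's $\vcd\G=\infty$ examples), and where controlled surgery and engulfing in the spirit of Connolly--Prassidis and Prassidis are required, now fed by the homotopy-finiteness of $E$ that periodicity of $\BG$ guarantees. From this viewpoint the earlier theorems of Johnson ($K(A,1)$ finite), Connolly--Prassidis ($\vcd\G<\infty$), and Prassidis ($\gencd\G<\infty$ with a complete resolution) are the cases in which the assembly in $(ii)$ is comparatively routine, and the theorem asserts that no further hypothesis on the countable group $\G$ is needed once $\BG$ is known to be periodic.
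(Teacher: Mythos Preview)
The paper does not give its own proof of this statement: it is quoted from \cite{adem-smith1} as the endpoint of the survey in that section. So there is no in-paper argument to compare against, and your forward implication is precisely the Corollary already recorded in Section~\ref{sec:periodic}.

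Your converse, however, has a genuine gap at step~$(i)$. You propose to lift the classifying map $\BG\to BG(N{+}1)$ through $B\mathrm{TOP}(N{+}1)$, arguing that ``in the stable range'' this is possible. But the obstructions to such a lift lie in $H^{*}(\BG;\pi_{*}(G/\mathrm{TOP}))$, and $\BG$ is in general infinite-dimensional---indeed you emphasize later that $\vcd\G$ can be infinite. Raising $N$ by fibrewise joins enlarges the fibre but does nothing to the cohomology of the base, so these obstructions need not vanish. The microbundle you want simply may not exist over all of $\BG$.

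The actual Adem--Smith argument bypasses this lifting problem entirely by working with the finite-dimensional total space $E$ rather than with $\BG$. Since $\G$ is countable, $E$ admits a countable, locally finite simplicial model; embed that model properly as a subcomplex of some $\bR^{k}$ and take an open regular neighbourhood $W$. Then $W$ is an honest open $k$-manifold with $W\simeq E$, hence $\pi_1(W)\cong\G$ and $\widetilde W\simeq S^{N}$. The remaining step---recognizing the simply connected open manifold $\widetilde W$ (possibly after stabilizing by $\times\,\bR^{j}$) as $S^{N}\times\bR^{m}$---is a Stallings/engulfing argument, close in spirit to what you gesture at in~$(ii)$, but the manifold structure it feeds on comes from the regular-neighbourhood embedding of $E$, not from any bundle lift over $\BG$. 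Your exhaustion-and-thicken scheme in~$(ii)$ is therefore working harder than necessary, and its input from~$(i)$ is not available.
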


It would be interesting to know if there are groups which have $(q,k)$-periodic cohomology, but do not act freely and properly on any $\bR^m \times S^n$. After the solution of Wall's problem, this is  now primarily a question in group cohomology (see Remark \ref{rem:equiv}).

\section{Co-compact actions on $\bR^m \times S^n$}

We now return to the setting described in the Introduction, where $X$ is a closed topological manifold with universal covering $\wX \simeq S^n$. We are interested in determining the possible fundamental groups $\G = \pi_1(X, x_0)$ for such manifolds.   In this section, we assume that $\wX = \bR^m \times S^n$ for some $n$, $m$.
\begin{Question} Which discrete group $\G$ can act freely, properly discontinuously on $\bR^m \times S^n$ with compact quotient~?
\end{Question}

From the solution of Wall's problem by Adem and Smith \cite{adem-smith1}, we know that any $\G$ which admits a free co-compact action must be countable and $\BG$ must have periodic cohomology. In particular, every finite subgroup of $\G$ must have periodic cohomology (see Kulkarni \cite{kulkarni3} for some nice co-compact examples arising from differential geometry).

Farrell and Wall \cite[p.~518]{wall-ppr} asked specifically for some analogue of Milnor's condition:

\begin{Question} If $\G$ acts freely, properly and co-compactly on $\bR^m \times S^n$, then could $\G$ contain a finite dihedral subgroup~?
\end{Question}
 Consider the groups $\G =\bbZ^k \rtimes_\alpha D_p$, obtained as the semi-direct product of a finite dihedral group $D_p$, for $p$ an odd prime, and a free abelian group $\bbZ^k$, via an integral representation $\alpha\colon D_p \to GL_k(\bbZ)$. It turns out that some of these groups can act co-compactly on $\bR^m \times S^n$, so finite dihedral subgroups are not ruled out.

\begin{Theorem}[{\cite[Theorem 8.3]{hp1}}] The group $\G = \bbZ^k \rtimes_\alpha D_p$ acts freely, properly, and co-compactly on $\bR^m \times S^n$, if and only if $ n \equiv 3 \pmod 4$, $m = k$, and $\alpha$ considered as a real represenation has at least two $\bR_-$ summands.
\end{Theorem}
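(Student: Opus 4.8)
\emph{Proof plan.} The strategy is to reduce the co-compact action question to an algebraic periodicity and bounded-index computation, following the framework established earlier in the excerpt. By the Corollary, if $\G = \bbZ^k \rtimes_\alpha D_p$ acts freely, properly and co-compactly on $\bR^m \times S^n$, then $\BG$ must have $(m, n+1)$-periodic cohomology. The first step is to observe that $\G$ has $\vcd \G = k$, since the normal subgroup $\bbZ^k$ has finite index $|D_p| = 2p$ and $\cd(\bbZ^k) = k$; hence by property (iii) of Ikenaga's dimension and the finiteness of $\vcd$, the periodicity dimension $m$ is forced to equal $k$. The ``if and only if'' then splits into a detection/realization part.

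\emph{Necessity of the finite-subgroup conditions.} The finite subgroups of $\G$ are, up to conjugacy, the subgroups of $D_p$: cyclic groups $\cy{p}$, $\cy{2}$, and the dihedral groups $D_{p'}$ for $p' \vv p$ (here just $D_p$ itself and its cyclic pieces). For $\BG$ to have periodic cohomology, every finite subgroup must have periodic cohomology, and by Brown's theorem (\cite[Chap.~X]{brown1}) the Farrell cohomology of $\G$ is detected on this finite-subgroup lattice. The key point is that $D_p$ has periodic Tate cohomology of period $4$ (not $2$), which is the algebraic source of the congruence $n \equiv 3 \pmod 4$: the period of $\BG$ must be divisible by $4$, and since the Euler class lives in $H^{n+1}(\BG; \bZ)$ we need $4 \vv n+1$, i.e.\ $n \equiv 3 \pmod 4$. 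I would make this precise by computing $\wH^*(D_p; \bZ)$ and tracking how the restriction maps to the cyclic subgroups constrain the admissible period.

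\emph{The ``$\bR_-$ summands'' condition and the main obstacle.} The subtle direction is showing that \emph{two} $\bR_-$ summands are both necessary and sufficient for the realization. Here I would work with the real representation $\alpha \otimes \bR$ and decompose it into irreducibles; each $\bR_-$ summand is a sign representation on which the dihedral group acts through its $\cy{2}$ quotient. The dimension $k$ of the torus $T^k = \bR^k/\bbZ^k$ and the fixed-point behavior of $D_p$ on $T^k$ are governed by this decomposition, and the free action on $\bR^m \times S^n$ requires that the singular set of the $D_p$-action on the torus be handled by the sphere factor. I expect the hard part to be the \emph{necessity} of at least two $\bR_-$ summands: one must show that with fewer than two sign summands, the relevant surgery-theoretic or bundle-theoretic obstruction (an Euler-class or $\rho$-invariant condition at the dihedral fixed points) cannot vanish, so no free co-compact action exists. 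The sufficiency direction should follow by an explicit geometric construction analogous to the Examples in the Introduction, building the action on $T^k \times S^n$ and passing to the universal cover $\bR^k \times S^n$; since this is quoted as \cite[Theorem 8.3]{hp1}, I would cite the detailed obstruction computation there, and here only sketch how the two-summand condition enters as the precise count ensuring the dihedral fixed data can be resolved.
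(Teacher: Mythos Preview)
Your periodicity outline for the \emph{necessary} congruence $n\equiv 3\pmod 4$ is reasonable: $D_p\leq\G$ has Tate period $4$, and restriction of the Euler class forces $4\mid n+1$. The argument for $m=k$ is a bit muddled (in the paper's $(q,k)$-notation the period is $q=n+1$, not $m$), but the conclusion can be recovered by looking at the finite-index $\bbZ^k$-cover and comparing dimensions.

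The genuine gap is in your existence direction. You propose to ``build the action on $T^k\times S^n$'' by a construction ``analogous to the Examples in the Introduction.'' That cannot work: those Examples start from a \emph{free} action $\beta\colon G\to\Homeo(S^n)$ and then twist by $\hat\alpha$ on the torus factor. For $G=D_p$ there is \emph{no} such $\beta$---this is exactly Milnor's $2p$-obstruction---so the diagonal product recipe never gets off the ground. The whole content of the theorem is that $\G$ acts freely on $\bR^k\times S^n$ even though $D_p$ does not act freely on any $S^n$; the construction must therefore be genuinely non-product.

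The method the paper actually uses (recalled in Section~\ref{sec:thma}) is a \emph{blocked surgery} argument. One begins with a degree~$1$ normal map $(f,b)\colon M\to X$ into a finite Swan complex $X$ for $D_p$ (so $\wX\simeq S^n$, and such an $X$ exists precisely when $n\equiv 3\pmod 4$), forms the balanced product
\[
(f\times 1,\,b\times 1)\colon \wM\times_{D_p} T^2 \longrightarrow \wX\times_{D_p} T^2
\]
using the diagonal $-1$ action of $D_p$ on $T^2$ via $D_p\to\cy 2\to GL_2(\bbZ)$, and then solves the resulting blocked surgery problem. The ``two $\bR_-$ summands'' enter here, not as a fixed-set count you can read off directly, but as the condition that allows this $T^2$-block (with its $-1$ involution) to sit inside $T^k$ so that the bounded surgery obstruction vanishes. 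For the \emph{necessity} of the two $\bR_-$ summands the argument is again not the cohomological detection you sketch; the paper compactifies the $\G$-action on $\bR^m\times S^n$ to an action on $S^{n+2}$ that is free away from an invariant $S^1$, and then reads off the constraint on $\alpha$ from the equivariant structure near that singular circle (cf.\ \cite{hp3}). Your proposal does not contain either of these ingredients.
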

In the statement $\bR_-$ denotes the non-trivial 1-dimensional real representation of $D_p$. 
This result greatly complicates the finite subgroups problem for co-compact group $\G$-actions on $\bR^m \times S^n$. What other sorts of conditions are possible ?
\begin{Question} If $\G$ acts freely, properly and co-compactly on $\bR^m \times S^n$, then is  there a bound on the order of finite subgroups ? Could $\G$ contain infinitely many conjugacy classes of finite subgroups ? 
\end{Question}
Work of Leary and Nucinkis \cite{leary-nucinkis1, leary4} based on results of Bestvina, Brady, Bridson and Oliver shows that there are finitely-presented groups of type $VF$ (i.e~containing a normal finite index subgroup $\G_0\leq \G$ with $B\G_0$ finite) which do not satisfy the conjugacy finiteness condition on subgroups. 
\begin{Question} If $\G$ acts freely, properly and co-compactly on $\bR^m \times S^n$, and $\vcd\G < \infty$, then is $\G$ a virtual Poinca\'e duality group ?
\end{Question}
The closest approach to this problem so far is another result of Connolly and Prassidis: 
\begin{theorem}[{\cite[Theorem C]{connolly-prassidis1}}] Suppose that $\vcd\G <\infty$, and that $\G$ has only finitely many conjugacy classes of finite subgroups. If $N_\G(H)$ has type $F(\infty)$ for each finite subgroup $H \leq \G$, then the following are equivalent:
\begin{enumerate}
\item $\wH^*(\G;\bZ)$ is periodic and some finite index subgroup of $\G$ is a Poincar\'e duality group.
\item There is a finite Poincar\'e complex $X$ with $\pi_1(X, x_0) = \G$ and $\wX \simeq S^n$.
\end{enumerate}
\end{theorem}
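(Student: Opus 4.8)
I would prove the two implications separately; $(2)\Rightarrow(1)$ is essentially formal, while $(1)\Rightarrow(2)$ carries the real content.

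\noindent\textbf{$(2)\Rightarrow(1)$.} Let $X$ be a finite Poincar\'e complex with $\pi_1(X,x_0)=\G$ and $\wX\simeq S^n$. Being finite, $X$ is finite-dimensional, so the Proposition of Section~\ref{sec:periodic} shows that $\BG$ has periodic cohomology; by Remark~\ref{rem:equiv} every finite subgroup of $\G$ then has periodic cohomology, and since $\vcd\G<\infty$, Brown's description of Farrell cohomology by the finite subgroups \cite{brown1} forces $\wH^*(\G;\bZ)$ to be periodic. For the second part of $(1)$, choose a torsion-free finite-index subgroup $\G'\leq\G$ (available because $\vcd\G<\infty$) and pass to the corresponding finite cover $X'\to X$. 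Then $X'$ is again a finite Poincar\'e complex with $\widetilde{X'}\simeq S^n$, and the classifying map of its universal covering presents $X'$ (up to homotopy) as the total space of a fibration $S^n\to X'\to\BG'$ with finite Poincar\'e total space and Poincar\'e fibre; the fibration formula for Poincar\'e complexes (Gottlieb, Quinn) then shows that $\BG'$ is a finitely dominated Poincar\'e duality space, i.e.\ $\G'$ is a Poincar\'e duality group, so $(1)$ holds.

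\noindent\textbf{$(1)\Rightarrow(2)$.} Assume $\wH^*(\G;\bZ)$ is periodic, carried by an invertible class of degree $q$, and let $\G'\leq\G$ be a finite-index Poincar\'e duality group. Because $\vcd\G<\infty$, Farrell's comparison map $\iota\colon H^i(\BG;M)\to\wH^i(\G;M)$ is an isomorphism for all $i$ above $\vcd\G$, so a sufficiently high power of the periodicity class lifts to a genuine class $\alpha\in H^*(\BG;\bZ)$ with $\alpha\cup-$ an isomorphism in high degrees; that is, $\BG$ has periodic cohomology. The next step is to convert this into a geometric statement: using the construction of Adem and Smith \cite{adem-smith1} and the hypotheses that $\G$ has only finitely many conjugacy classes of finite subgroups and that each $N_\G(H)$ is of type $F(\infty)$, I would build a finite free $\G$-CW complex $Y\simeq S^n$ with Euler class a power of $\alpha$ --- these two finiteness hypotheses are precisely what guarantee finitely many orbit types and finite equivariant building blocks, so that the obstruction theory over the orbit category $\Or_\G$ runs and terminates (the analogue for $\vcd\G<\infty$ of Swan's construction of a finite periodic free resolution for a finite periodic group). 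Set $X=Y/\G$, a finite complex with $\pi_1(X,x_0)=\G$ and $\wX=Y\simeq S^n$. To see that $X$ is a Poincar\'e complex, pass to the finite cover $X'=Y/\G'$: it is (up to homotopy) the total space of a fibration $S^n\to X'\to\BG'$ over the finitely dominated Poincar\'e duality space $\BG'$, hence $X'$ --- and therefore $X$, which it finitely covers --- is a Poincar\'e duality complex. Thus $X$ is the required finite Poincar\'e complex.

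\noindent The step I expect to be the main obstacle is the geometric realisation in $(1)\Rightarrow(2)$, namely building a \emph{finite} free $\G$-CW homotopy sphere from the periodicity of $\wH^*(\G;\bZ)$. The delicate point is the gap between \emph{finitely dominated} and \emph{finite}: the $\G$-equivariant finiteness obstruction lives in $\widetilde K_0(\bZ\G)$ and, exactly as in Swan's theorem, need not vanish at the smallest admissible sphere dimension. One expects it to vary periodically with that dimension --- a fibrewise join raises the dimension by a multiple of $q$ and alters the obstruction in a controlled way --- and to be detected on the finitely many conjugacy classes of finite subgroups together with the Poincar\'e duality subgroup $\G'$, so that choosing $n$ in a suitable residue class makes it vanish; here one would invoke a Swan-type periodicity argument and Wall's finiteness criterion \cite{wall-finiteness1}. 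Once $Y$ has been made finite, everything else --- the fibrational descriptions of $X$ and $X'$ and the verification of duality --- is a formal consequence of the Gysin sequence and the fibration calculus for Poincar\'e spaces, just as in the $(2)\Rightarrow(1)$ direction.
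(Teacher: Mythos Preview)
The paper does not prove this theorem at all: it is quoted verbatim from Connolly and Prassidis \cite[Theorem~C]{connolly-prassidis1} as part of the survey, with no argument supplied. So there is no ``paper's own proof'' against which to compare your attempt.

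As for the proposal on its own merits: the direction $(2)\Rightarrow(1)$ is essentially sound. In $(1)\Rightarrow(2)$, however, you have correctly located but not closed the real gap. You propose to invoke Adem--Smith \cite{adem-smith1} to produce a free $\G$-CW homotopy sphere, but that result yields only a \emph{finite-dimensional} total space, not a \emph{finite} one; moreover it postdates \cite{connolly-prassidis1} by more than a decade, so it cannot be what the original argument uses. Your discussion of the finiteness obstruction in $\widetilde K_0(\bZ\G)$ is honest but remains heuristic: saying that ``one expects'' the obstruction to vary periodically under fibrewise join and that ``one would invoke a Swan-type periodicity argument'' is not a proof, and in fact for infinite $\G$ there is no reason the obstruction should be killed merely by raising the dimension. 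You also do not explain concretely how the two hypotheses --- finitely many conjugacy classes of finite subgroups, and $N_\G(H)$ of type $F(\infty)$ --- enter; these are not incidental but are exactly what is needed in \cite{connolly-prassidis1} to assemble a finite-type model for the classifying space for proper $\G$-actions, over which the spherical fibration is built and the finiteness obstruction is analyzed. Without that machinery, the passage from periodic Farrell cohomology to a genuinely finite Poincar\'e complex is incomplete.
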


Finally, 
Wall \cite{wall-ppr} also asked: if a discrete group $\G$ acts freely, properly discontinuously and co-compactly on $\bR^m \times S^n$, for some $m, n$, then is $\vcd \G < \infty$ ?  This was answered by Farrell and Stark \cite{farrell-stark1}, who found examples with infinite virtual cohomological dimension, based on work of Raghunathan \cite{raghunathan1}. The groups used in these examples are subgroups of Lie groups, but are not residually finite. Each one contains a finite central subgroup $A < \G$, such that $\vcd (\G/A) < \infty$. 
In addition, we note that $\gencd\G < \infty$ for these groups by \cite[Theorem 3.2]{ikenaga2}. 
The authors wondered \cite[p.~4]{farrell-stark1}:  ``whether every co-compact spherical-Euclidean group can be obtained from a group of finite cohomological dimension by combinations of central extensions with finite kernels and extensions with finite quotients ?" This suggests the following:
\begin{Question}
if a residually finite discrete group $\G$ acts freely, properly discontinuously and co-compactly on $\bR^m \times S^n$, for some $m, n$, then is $\vcd \G < \infty$ ?
\end{Question}
We conclude by mentioning another more speculative source of restrictions on groups admitting co-compact actions (see \cite{stark2}, \cite{hp3} for some discussion).
\begin{Question} What can be said about Gromov's  \emph{large-scale} geometry of discrete groups  which acts freely, properly and co-compactly on $\bR^m \times S^n$ ?
\end{Question}

\section{Involutions on the Klein bottle}\label{sec:thma}
It is well-known that there are three distinct involutions (up to conjugacy) on the torus $T^2$ with quotient a closed surface. If we represent points on $T^2 = S^1 \times S^1$ as pairs $(z, w)$ of unit length complex numbers, then we have:
\begin{enumerate}
\item $\sigma(z, w) = (\bar z, \bar w)$
\item $\tau(z, w) = (-z, \bar w)$
\item $\lambda(z, w) = (-z, w)$
\end{enumerate}
In the first case, $\Fix(T^2, \sigma)$ consists of 4 isolated points, and the quotient is $S^2$; in the second case,  $\Fix(T^2, \tau) = \emptyset$ and the quotient is the Klein bottle $\bbK^2$. We have a diagram of compatible quotients:
$$\xymatrix{T^2 \ar[r]^{/\sigma}\ar[d]_{/\tau} & S^2\ar[d]\cr
 \bbK^2 \ar[r]^{/\alpha} & \RP^2}$$
 The projection $S^2 \to \RP^2$ is the usual double covering; the remaining involution $\alpha\colon \bbK^2 \to \bbK^2$ has 2 isolated fixed points and the quotient space is $\RP^2$.
 
  Let $\scM = \RP^2 \setminus int\, D^2$ denote the Moebius band, with
   $\bd \scM = S^1$. We observe that
$\bbK^2 = \scM \cup_r \scM$
where $r\colon \bd \scM = S^1 \to S^1 = \bd\scM$ is given by $r(z) = \bar z$. For example, this follows because the surface obtained by the glueing is non-orientable and has Euler characteristic zero. 
Here is a geometric description of $(\bbK^2, \alpha)$: 
 
 \begin{lemma} The involution $(\bbK^2, \alpha)$ extends the reflection $r\colon \bd \scM \to \bd\scM$ by interchanging the two copies of $\scM$.
\end{lemma}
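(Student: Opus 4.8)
The plan is to make the decomposition $\bbK^2 = \scM \cup_r \scM$ equivariant with respect to $\alpha$ and identify the quotient involution on each piece. First I would recall that $\alpha$ is defined via the commutative square of quotients: it is the unique involution fitting into
$$\xymatrix{T^2 \ar[r]^{/\tau}\ar[d]_{/\lambda} & \bbK^2\ar[d]^{/\alpha}\cr
 \bbK^2 \ar[r] & \RP^2}$$
wait—more precisely, $\alpha$ is the involution on $\bbK^2 = T^2/\tau$ induced by the $\sigma$-action, using that $\sigma$ and $\tau$ commute. So I would begin by writing $\bbK^2 = T^2/\la\tau\ra$ and observing that $\sigma$ commutes with $\tau$ (both act coordinatewise, and $\sigma\tau = \tau\sigma$ since complex conjugation commutes with $z\mapsto -z$), hence $\sigma$ descends to the involution $\alpha$ on $\bbK^2$, and $\bbK^2/\la\alpha\ra = T^2/\la\sigma,\tau\ra = \RP^2$ as in the displayed square.

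Next I would produce the $\alpha$-invariant Moebius bands. The torus $T^2 = \{(z,w)\}$ decomposes along the two circles $w = 1$ and $w = -1$ into two annuli $A_\pm = \{(z,w) : \pm\,\mathrm{Im}\,w \geq 0\}$, each preserved by $\tau(z,w) = (-z,\bar w)$ since $\tau$ fixes the set $\{w = \pm 1\}$ and preserves the sign of $\mathrm{Im}\,w$. Because $\tau$ acts freely, each quotient $A_\pm/\la\tau\ra$ is a Moebius band (a free involution on an annulus that reverses no boundary component... rather: $\tau$ restricted to $A_+$ identifies its two boundary circles $w=1$ and $w=-1$? No—$\tau$ preserves each of them, acting as $z\mapsto -z$ on each; a free involution on an annulus preserving each boundary circle gives a Moebius band), and $\bbK^2 = A_+/\tau \cup A_-/\tau$ glued along the image of $\{w = 1\} \cup \{w=-1\}$. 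Now $\sigma(z,w) = (\bar z,\bar w)$ also preserves $\{\mathrm{Im}\,w \geq 0\}$? It sends $\mathrm{Im}\,w$ to $-\mathrm{Im}\,w$, hence swaps $A_+ \leftrightarrow A_-$. Therefore the induced involution $\alpha$ on $\bbK^2$ swaps the two Moebius bands $\scM = A_+/\tau$ and $\scM = A_-/\tau$, which is exactly the assertion that $\alpha$ ``interchanges the two copies of $\scM$.''

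It then remains to identify the map $\alpha$ induces on the common boundary circle. The gluing locus is $\{w = \pm 1\}/\tau$; on the circle $\{w=1\}$ with coordinate $z$, $\tau$ acts by $z \mapsto -z$, so the quotient circle has coordinate $\zeta = z^2$, and $\sigma$ acts there by $z\mapsto \bar z$, i.e. $\zeta \mapsto \bar\zeta$ — the reflection $r$. So I would check that under the identification $\bd\scM = \{w=1\}/\tau \cong S^1$, $\alpha$ restricts to $r(\zeta) = \bar\zeta$, matching the gluing map $r$ used to build $\bbK^2 = \scM\cup_r\scM$. The main obstacle is bookkeeping: one must be careful that the two circle factors $\{w=1\}$ and $\{w=-1\}$ really are identified in $\bbK^2$ (they are, via $\tau$, since $\tau(z,1)$ and $(z,1)$ have the same $w=1$ but $\tau$ does not move between the circles — so actually $\{w=1\}$ and $\{w=-1\}$ give two \emph{distinct} circles in $\bbK^2$, and they are the two boundary circles being glued), and that orientations are tracked so the gluing map comes out as $r$ and not $r^{-1} = r$ (here harmless since $r$ is an involution). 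Once the coordinates are set up cleanly, the verification is a short direct computation, and the lemma follows.
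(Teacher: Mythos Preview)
The paper offers no proof of this lemma --- it is stated as a geometric description of $\alpha$ and immediately used --- so there is nothing to compare against. Your torus-model argument, however, does not go through.

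The immediate slip is the claim that $\tau(z,w)=(-z,\bar w)$ ``preserves the sign of $\mathrm{Im}\,w$'': conjugation negates $\mathrm{Im}\,w$, so in fact $\tau$ \emph{swaps} your annuli $A_+$ and $A_-$ rather than preserving them, and the pieces $A_\pm/\langle\tau\rangle$ are not even defined as subsets of $\bbK^2$. But the problem runs deeper than a badly chosen cut: the involution you are calling $\alpha$, namely the one induced by $\sigma$ on $\bbK^2=T^2/\langle\tau\rangle$, is not the paper's $\alpha$. A $\tau$-orbit $[p]$ is fixed by this induced map exactly when $\sigma p=p$ or $\sigma p=\tau p$; the second alternative is $p\in\Fix(\sigma\tau)$, and since $\sigma\tau(z,w)=(-\bar z,w)$ this contributes the two circles $\{\pm i\}\times S^1$, whose image in $\bbK^2$ is an entire fixed \emph{circle} in addition to the two isolated points coming from $\Fix(\sigma)$. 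Equivalently, the induced $\tau$ on $S^2=T^2/\langle\sigma\rangle$ is a reflection, not the antipodal map, so the common quotient $T^2/\langle\sigma,\tau\rangle$ is a disk, not $\RP^2$. Hence the involution you have written down cannot be the $\alpha$ of the lemma. To salvage the approach you would need a genuinely different lift of $\alpha$ to $T^2$, or else argue directly from $\bbK^2=\scM\cup_r\scM$ that the swap map extending $r$ is a well-defined involution with two fixed points (namely $\Fix(r)=\{\pm 1\}\subset\bd\scM$) and quotient $\RP^2$.
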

From this description, we see that $(\bbK^2, \alpha)$ has two isolated fixed points and quotient $\RP^2$. Now we can define an involution on any non-orientable surface
$$(\Sigma^2, \alpha)  = (\bbK^2, \alpha)\Sharp (\bbK^2, \alpha) \Sharp \dots \Sharp (\bbK^2, \alpha)$$
by taking the equivariant connected sum at  the fixed points in successive copies of $(\bbK^2, \alpha)$. The resulting involution $(\Sigma^2, \alpha)$ has 2 isolated fixed points.

We will also need some information about the fundamental groups in the ramified 
covering $\bbK^2 \to \RP^2$. In the quotient, the van Kampen diagram
$$\xymatrix{& \pi_1(D^2 - 2 \pts) = F(a,b)\ar[dr]& \cr
\pi_1(S^1) \ar[ur]\ar[dr]&& G = \la a,b,c\vv c^2 = ab^{-1}\ra\cr
& \pi_1(\scM) = \bbZ \la c\ra\ar[ur]&}$$
yields the isomorphism
$G \cong \pi_1(\RP^2 - 2 \pts) = F(a,c)$, with the free group on two generators $a$, given by a loop around one of the fixed points,  and $c$ given by the core of the Moebius band $\scM$. A loop around the other fixed point represents $b = ac^{-2}$.  The regular covering  over $(\RP^2 - 2 \pts)$ is induced by the homomorphism $\varphi\colon F(a,c) \to \cy 2 \cong \{\pm 1\}$ defined by $\varphi(c) = 1$, and $\varphi(a) = \varphi(b) = -1$. 
Since the loop $\bd D^2$ represents $ab^{-1} = c^2$, we see that the covering $\bbK^2 \to \RP^2$ is unramified  on the complement of two $\cy 2$-invariant disks $H_2 \disjointunion H'_2$ (or equivariant $2$-handles) around the fixed points (in $\bbK^2$) union two annuli (or thickened 1-handles) $H_1 \disjointunion \alpha H_1$ joining the 2-handles.

\begin{proof}[The proof of Theorem A] 
The basic step for the existence part of our earlier result \cite[Theorem 8.3]{hp1} about dihedral subgroups was the solution of a compact  ``blocked surgery problem":
$$ (f \times 1, b\times 1)\colon \wM \times_{D_p} T^2 \to \wX \times_{D_p} T^2$$
where $(f,b)\colon M \to X$ was a degree 1 normal map from a closed $n$-manifold $M$ to a finite Swan complex $X$ for the dihedral group $D_p$. 
We formed the $D_p$-balanced quotient by using the diagonal $-1$ action of $D_p$ on $T^2$, via the projection $D_p \to \cy 2 \to GL_2(\bbZ)$.

Recently we looked again at our blocked surgery argument, and noticed that the role of the torus $T^2$ could be replaced by any non-orientable surface (Stark \cite{stark1} used orientable surfaces of any genus $g \geq 1$).
We consider the blocked surgery problem $$(f\times 1, b\times 1)\colon \wM^n \times_{D_p} \Sigma^2 \to \wX \times_{D_p} \Sigma^2,$$ where $X$ is an $n$-dimensional finite Swan complex for $D_p$,  which acts on the non-orientable closed surface $\Sigma^2$ by the involution $\alpha$ constructed above. 
The existence part of Theorem A now follows exactly the same steps as in the proof of our previous blocked surgery result \cite[Theorem 8.2]{hp1}. We use the 
equivariant handle information for $(\Sigma, \alpha)$ as a ramified covering to specify the blocks.
 
In the other direction, the condition given in Theorem A is also necessary. Assume that the group $\G = \pi\rtimes_\alpha D_p$ act freely, properly and co-compactly on $\bR^2 \times S^n$, where $\pi = \pi_1(\Sigma)$, for a closed surface $\Sigma^2 \neq S^2, \RP^2$. Then by \cite[Theorem A]{hp3}, the $\G$-action can be compactified to an action on $S^{n+2}$, which is free away from  an invariant $S^1 \subset S^{n+2}$ where the action is
 induced by $\alpha$. Now the necessity of our condition on $\alpha$ follows from \cite[Theorem 7.11]{hp3}.
 \end{proof}

\providecommand{\bysame}{\leavevmode\hbox to3em{\hrulefill}\thinspace}
\providecommand{\MR}{\relax\ifhmode\unskip\space\fi MR }
\providecommand{\MRhref}[2]{%
  \href{http://www.ams.org/mathscinet-getitem?mr=#1}{#2}
}
\providecommand{\href}[2]{#2}

\end{document}